\documentclass[11pt]{amsart}
\usepackage[T1]{fontenc}
\usepackage{lmodern,amsmath,amssymb,mathtools}
\usepackage[margin=1in]{geometry}
\usepackage[expansion=false]{microtype}
\usepackage{xcolor,array,booktabs,tabularx}
\usepackage[colorlinks=true,linkcolor=blue!50!black,citecolor=blue!50!black,urlcolor=blue!50!black]{hyperref}
\newtheorem{theorem}{Theorem}[section]
\newtheorem{lemma}[theorem]{Lemma}
\newtheorem{proposition}[theorem]{Proposition}
\newtheorem{corollary}[theorem]{Corollary}
\theoremstyle{definition}
\newtheorem{definition}[theorem]{Definition}
\newtheorem{remark}[theorem]{Remark}
\newcommand{\C}{\mathbb C}
\newcommand{\N}{\mathbb N}

\newcommand{\Her}{\operatorname{Her}}
\newcommand{\supp}{\operatorname{supp}}

\newcommand{\intt}{\operatorname{int}}

\newcommand{\sa}{\mathrm{sa}}

\newcommand{\diag}{\operatorname{diag}}

\newcommand{\MC}{\mathcal M_1(X,\Gamma)}
\newcommand{\ocap}{\operatorname{ocap}}
\newcommand{\osupp}{\operatorname{osupp}}
\newcolumntype{P}[1]{>{\raggedright\arraybackslash}p{#1}}
\newcolumntype{Y}{>{\raggedright\arraybackslash}X}
\hypersetup{pdftitle={Single generation of transformation group C*-algebras},
pdfauthor={George A. Elliott, Chun Guang Li, and Zhuang Niu},
pdfsubject={Free minimal actions of finitely generated amenable groups with URP and COS},
pdfkeywords={single generation, crossed products, Rokhlin towers, Cuntz comparison}}
\numberwithin{equation}{section}
\title[Single generation of transformation group algebras]{Single generation of transformation group\texorpdfstring{ $C^*$}{ C*}-algebras}
\author{George A. Elliott}
\address{Department of Mathematics, University of Toronto,
Toronto, Ontario M5S 2E4, Canada}
\email{elliott@math.toronto.edu}
\author{Chun Guang Li}
\address{School of Mathematics and Statistics, Northeast Normal University,
Changchun 130024, P. R. China}
\email{licg864@nenu.edu.cn}
\author{Zhuang Niu}
\address{Department of Mathematics and Statistics, University of Wyoming,
Laramie, WY 82071, USA}
\email{zniu@uwyo.edu}
\date{October 15, 2026}
\subjclass[2020]{46L05, 46L55, 37B05}
\keywords{single generation, transformation group C*-algebra, uniform Rokhlin property, Cuntz comparison of open sets}
\begin{document}
\begin{abstract}
We prove that $C(X)\rtimes\Gamma$ is singly generated whenever $\Gamma$
is a finitely generated discrete amenable group and its action on the
nonempty compact metrizable space $X$ is free, minimal, and has the
uniform Rokhlin property and Cuntz comparison of open sets. In
particular, every free minimal $\mathbb Z^d$-crossed product with
metrizable underlying space is singly generated.
\end{abstract}
\maketitle

\section{Introduction}

In \cite{Kadison}, Kadison asked whether every von Neumann algebra acting on a 
separable Hilbert space is singly generated (see also in \cite{Ge}). This problem
was reduced to the \(\mathrm{II}_1\) factor case(\cite{Willig}). Recently,
OpenAI \cite{OpenAI} resolves the remaining case. In the $C^*$-algebra setting,
one can ask whether each simple separable $C^*$-algebra singly generated?
Simplicity is necessary, otherwise $C(S^2)$ is not singly generated. Many classes of 
$C^*$-algebras are proved to be singly generated, such as stable $C^*$-algebras,
UHF-absorbing $C^*$-algebras, $\mathcal{Z}$-absorbing $C^*$-algebras, the first type Villadsen's algebras,
$C_r^*(\mathbb{F}_2)$, and $C_r^*(\mathbb{F}_n)$ for each $n\in\{3,4,\ldots\}\cup\{\infty\}$(\cite{OlsenZame, Topping, ThielWinter, LiNiuRuzicka,
ELN, ELN1}). On the other hand, we construct a simple separable unital AH algebra which is not singly generated(\cite{ELNNonGenerator}). Thus simplicity and separability alone do not suffice.

For transformation group algebras, the uniform Rokhlin property (URP)
and Cuntz comparison of open sets (COS) provide large matrix models and
control their complements. It is showed that these two properties
imply stable rank one for a free minimal action of a countable amenable
group \cite[Theorem 7.8]{LN}. The purpose of this paper is to prove
a single-generation theorem under the same dynamical hypotheses when
the acting group is finitely generated. In fact, we get the following result.

\begin{theorem}\label{thm:main}
Let $\Gamma$ be a finitely generated discrete amenable group, and let
$(X,\Gamma)$ be a free and minimal dynamical system, where $X$ is a
nonempty compact metrizable space. If $(X,\Gamma)$ has the URP and COS,
then $C(X)\rtimes\Gamma$ is singly generated.
\end{theorem}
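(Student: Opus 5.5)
We reduce single generation to producing two self-adjoint elements that generate $A=C(X)\rtimes\Gamma$. Indeed, if $A=C^*(a,b)$ with $a=a^*$ and $b=b^*$, then $a+ib$ generates $A$. Fix a finite symmetric generating set $S=\{s_1,\dots,s_m\}$ of $\Gamma$ with unitaries $u_{s_1},\dots,u_{s_m}$. Then $A$ is generated by $C(X)$ together with $u_{s_1},\dots,u_{s_m}$. Since $X$ is compact metrizable, $C(X)$ is generated by a countable family $\{f_n\}$ of positive contractions, and finitely many unitaries $u_{s_j}$ sit on top of that. The task is to compress this countable data into a pair of self-adjoints.

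The strategy follows the Olsen--Zame/Thiel--Winter type argument. The key is to find, inside $A$, a large ``matrix-like'' subalgebra that can encode the generators. The URP supplies this. For each $\varepsilon>0$ and each finite $F\subset\Gamma$, there are Rokhlin towers $\{(B_i,F_i)\}$ with shapes $F_i$ that are $(F,\varepsilon)$-invariant, such that the complement of $\bigsqcup_i F_iB_i$ has small orbit capacity. COS then lets us compare open sets by their orbit-capacity values: a set of small orbit capacity is Cuntz subequivalent, in $A$, to open sets of larger orbit capacity, and in particular to a piece $g_0B_i$ of a single tower level.

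Concretely, I would construct the following. Choose a tower and a level $B$ in it, and a positive element $h\in C_0(\text{some open }U)$ whose hereditary subalgebra $\overline{hAh}$ is full. Fullness is automatic by simplicity, which holds since the action is minimal and free. Using COS, I would find partial isometries (or elements $v_k$ with $v_k^*v_k\in\overline{hAh}$) that move large portions of $1-h$ into $\overline{hAh}$. This gives a decomposition of $A$ modelled on a matrix algebra $M_N(\overline{hAh})$ up to a small corner. The point is that $A$ then contains a copy of something like $M_2(D)$ or $M_N(D)$ with $D$ hereditary. One then uses the standard trick: $M_N(D)$-type algebras with $N\ge 2$ whose generators can be placed in ``off-diagonal'' positions are generated by two self-adjoints. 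We encode countably many generators $f_n$, $u_{s_j}$ (compressed) as $\sum_n 2^{-n}e_{1n}x_ne_{n1}$-style sums, with distinct spectral weights on the diagonal self-adjoint $a$. From $a$ we recover the spectral projections or cut-downs $e_{kk}$ by functional calculus, and then we extract each $x_n$ from $b$.

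The main obstacle is the step just described: \emph{the matrix units are only approximate}. The towers do not cover $X$, and the comparison given by COS is Cuntz subequivalence rather than Murray--von Neumann equivalence of projections ($X$ may be connected, so there are no projections). I would deal with this as follows. Use the finite generation of $\Gamma$ so that only finitely many unitaries $u_{s_j}$ need encoding, and let each element $u_{s_j}$ be cut by continuous functions supported on tower levels, so that $u_{s_j}f$ with $f$ supported in the interior of the tower acts as a genuine ``matrix unit'' between levels $gB$ and $s_jgB$. Then realize the residual part, supported near the tower boundary and the uncovered set, via elements $r$ with $r^*r$ supported in a set of small orbit capacity, and push it by COS into a single level $B$. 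The final technical step is to show that the $C^*$-algebra generated by $a$ and $b$ contains $C(X)$ and all $u_{s_j}$. For this I would show that it contains $h\cdot C(X)\cdot h$ and the connecting elements, hence an ideal-like full hereditary piece. Its closed two-sided ideal is then all of $A$ by simplicity, which, combined with the fact that the encoded elements were generators, yields $A$. I would anticipate needing to iterate over a sequence of towers with $\varepsilon_n\to0$, with weights $2^{-n}$, to capture all $f_n$ exactly rather than approximately.
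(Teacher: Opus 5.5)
Your outline (URP towers as a matrix model, COS to absorb the remainder, Olsen--Zame encoding against a diagonal with distinct labels) matches the spirit of the paper's first stage. However, it stops where the real difficulties begin, and the step you use to conclude is not valid.

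\textbf{The concluding step.} Knowing that $C^*(a,b)$ contains $hC(X)h$ and some connecting elements, hence a full piece of $A$, does not give $C^*(a,b)=A$ by simplicity. Simplicity concerns ideals of $A$, while $C^*(a,b)$ is only a subalgebra; for instance $C(X)$ contains the full element $1$ and is proper. You need explicit reconstruction formulas, and the paper supplies them in two steps.

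First, it recovers the finitely many $u_{s_j}$ \emph{exactly}. This needs exact transport of the remainder $r$, which Cuntz subequivalence alone does not provide. The paper upgrades the COS comparison, via stable rank one and the Hilbert-module picture (Lemma~\ref{lem:exact}), to some $v$ with $v^*v=r$ on the nose. Then $r=v^*v$ and $v^*d_iv=rc_ir$ are decoded, where $d_i=vc_iv^*$ is stored off-diagonally, and $c_i=r^{1/2}a_ir^{1/2}$ is the norm limit of $k_\varepsilon(r)(rc_ir)k_\varepsilon(r)$.

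The target also cannot be a single level. The remainder in general contains several entire boundary rows, so it is larger in measure than one level, which rules out $\precsim$. The paper instead reserves a fixed proportion $\rho$ of the interior rows.

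Second, with all $u_\gamma$ available, the paper recovers an actual $f\in C(X)$ from corners by Lemma~\ref{lem:frame}:
$f=S^{-1}\bigl(\sum_{\gamma,\nu}V_\gamma^*(V_\gamma fV_\nu^*)V_\nu\bigr)S^{-1}$ with $V_\gamma=\chi^{1/2}u_\gamma^*$, where minimality gives $S=\sum_{\gamma\in F}\alpha_\gamma(\chi)\geq\kappa1$.

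\textbf{The countable part.} This is also missing. A tower has only finitely many matrix positions, so $\sum_n2^{-n}e_{1n}x_ne_{n1}$ is not available. Encodings built on different towers overlap in $X$ and cannot be separated from a $2^{-n}$-weighted sum. Since $C(X)$ need not be finitely generated, you cannot avoid this. The paper proceeds as follows:
\begin{enumerate}
\item It places the $n$-th encoding in mutually orthogonal $\Her(\chi_n)$, $\chi_n=\psi_n(z)$, cut out by disjoint spectral intervals of a single function $z$ (Lemma~\ref{lem:bands}).
\item It shows that inside any nonzero $\Her(\chi)$ a pair $(H,K)$, together with the known unitaries, recovers a prescribed $f$ (Proposition~\ref{prop:local}, via columns built from $\chi^{1/2}$ and the frame identity).
\item It stores $z$, $\sum_n2^{-n}H_n$ and $\sum_n2^{-n}K_n$ in three vacant positions of the first construction (Proposition~\ref{prop:finite}); $H_n$ and $K_n$ are then cut out by $\vartheta_n(z)$.
\end{enumerate}

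\textbf{Separating matrix positions.} Recovering $e_{kk}$ by functional calculus of $a$ cannot work as stated. $A$ may have no nontrivial projections, the $e_{kk}$ are multipliers outside $A$, and the soft diagonal $t\,\diag(\lambda_i)$ has all its row spectra accumulating at $0$. The paper separates positions by two-variable Fourier filters (Lemmas~\ref{lem:filter} and~\ref{lem:labels}) on the rays $\{(\lambda_i\tau,\lambda_j\tau)\}$ with pairwise distinct ratios $\lambda_i/\lambda_j$. It combines these with origin cutoffs, justified because the coefficients have size $O(t)$.
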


Since any free and minimal $\mathbb Z^d$ action has URP and COS(\cite{NiuZd}),
we have the following result.

\begin{corollary}\label{cor:zd}
Let $d\geq1$, and let $(X,\mathbb Z^d)$ be a free and minimal dynamical
system on a nonempty compact metrizable space. Then
$C(X)\rtimes\mathbb Z^d$ is singly generated.
\end{corollary}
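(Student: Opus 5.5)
The corollary is immediate from Theorem~\ref{thm:main} together with the cited result of \cite{NiuZd}. Since $\mathbb Z^d$ is finitely generated by the $d$ standard basis vectors, and it is abelian, hence amenable, the group-theoretic hypotheses of Theorem~\ref{thm:main} hold. Fix a free and minimal action of $\mathbb Z^d$ on a nonempty compact metrizable space $X$. By \cite{NiuZd}, every free minimal $\mathbb Z^d$-action on a compact metrizable space has the uniform Rokhlin property and Cuntz comparison of open sets. Hence $(X,\mathbb Z^d)$ has the URP and COS.

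Thus all hypotheses of Theorem~\ref{thm:main} are met with $\Gamma=\mathbb Z^d$: the group is finitely generated, discrete and amenable, and the action is free, minimal, and has URP and COS. The theorem then says that $C(X)\rtimes\mathbb Z^d$ is singly generated.

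The only point that needs care is matching the hypotheses of \cite{NiuZd} to those used here. I would check that the definitions of URP and COS in \cite{NiuZd} agree with the ones used in this paper, which follow \cite{LN}. I would also check that \cite{NiuZd} needs only metrizability of $X$, with no finite-dimensionality assumption. Since the introduction cites \cite{NiuZd} for exactly this statement, I expect no obstacle; the substantive work is entirely in Theorem~\ref{thm:main}.
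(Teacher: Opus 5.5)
Your proposal is correct and is exactly the paper's argument. The paper deduces the corollary from Theorem~\ref{thm:main}, using that $\mathbb Z^d$ is finitely generated and amenable and that every free minimal $\mathbb Z^d$-action has URP and COS by \cite{NiuZd}.
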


The proof has two stages. In the first stage, a finite family of
self-adjoint Fourier sums is recovered from two self-adjoint elements
(Proposition~\ref{prop:finite}). The interior of each Rokhlin tower
provides a matrix algebra. Its complement has uniformly small measure,
so COS and stable rank one move the remainder into a small collection
of tower rows. The available off-diagonal positions accommodate both
the Fourier coefficients and every corner of the transported
remainder. Three positions are left unused. Applied to the real and
imaginary parts of finitely many group generators, this construction
recovers all implementing unitaries.

The second stage recovers a prescribed diagonal function using a pair
in any nonzero diagonal hereditary subalgebra, relative to these known
unitaries (Proposition~\ref{prop:local}). The central reconstruction
identity is the following. If a finite family $(V_\gamma)_{\gamma\in F}$
has $S=\sum_\gamma V_\gamma^*V_\gamma$ invertible, then
\begin{equation}\label{eq:intro-frame}
 a=S^{-1}\left(\sum_{\gamma,\nu\in F}
 V_\gamma^*(V_\gamma aV_\nu^*)V_\nu\right)S^{-1}.
\end{equation}
Minimality supplies such a frame with
$V_\gamma=\chi^{1/2}u_\gamma^*$ for any nonzero
$\chi\in C(X)_+$. We encode the inner corners in
\eqref{eq:intro-frame}; the outer frame operators are already known.
This recovers the actual diagonal function without recovering a
tower-base projection. Disjoint spectral intervals then combine
countably many local pairs, and the three unused positions store the
resulting three hereditary elements.

The matrix encoding follows the principle of Olsen and Zame
\cite{OlsenZame}. Here the matrix units are multipliers, the tower algebras
need not be full corners, and the complement must be recovered in
norm. We address these points by scalar cutoffs, separate coding of
all remainder corners, and the finite-frame identity. Uniform smallness
in measure is used for comparison and is never converted into
smallness in norm.

The finite generation assumption is used to recover all implementing
unitaries in the first construction. Countably many local pairs can
then be placed in disjoint spectral intervals and combined by norm
convergent series. The method does not assume strict comparison of all
positive elements or $\mathcal Z$-stability. For comparison, under URP
and COS, zero mean dimension already gives $\mathcal Z$-stability by
theorem \cite{NiuZ}, and hence single generation by \cite{ThielWinter}.
The argument here does not impose zero mean dimension. The theorem
asserts existence of a generator; it does not assert density of
generators or a bound on generator rank.

Section~2 fixes the tower and comparison conventions. Section~3 gives
the finite-frame identity and the off-diagonal filtering lemmas.
Sections~4 and~5 establish the two recovery constructions. Section~6
combines them and proves the main theorem. Section~7 discusses the
role of finite generation and the scope of the method.

\subsection*{Acknowledgements}
The work was carried out using ChatGPT. The first author is
supported by an NSERC Discovery Grant, the second author is supported by an
NNSF Grant of China (No. 12671150), and the third author is supported by a Simons
Foundation Grant (MP-TSM-00002606).

\section{Notation and preliminaries}

\subsection{Dynamical systems and Rokhlin towers}
\begin{definition}
Consider a topological dynamical system $(X,\Gamma)$, where $X$ is a separable compact
Hausdorff space, and $\Gamma$ is a discrete group which acts on $X$ from the right. The 
dynamical system $(X,\Gamma)$ is said to be minimal if
\[
Y\gamma=Y,\quad \gamma \in\Gamma,
\]
for some closed $Y\subseteq X$ implies $Y=\emptyset$ or $Y=X$; it is said to be free if 
\[
x\gamma=x
\]
for some $x\in X$ and $\gamma\in\Gamma$ implies $\gamma=e$.
\end{definition}

\begin{definition}
A Borel measure $\mu$ on $X$ is invariant under the action $\sigma$ if for any Borel set $E\subseteq X$,
one has
\[
\mu(E)=\mu(E\gamma),\quad \gamma\in \Gamma.
\]
Denote by $\MC$ the compact space of invariant Borel probability
measures on $X$. It is a Choquet simplex under the weak* topology.
\end{definition}

\begin{definition}
Let $\Gamma$ be a countable discrete group. Let $K\subseteq \Gamma$ be a finite set and let $\varepsilon>0$.
Then a finite set $E\subseteq \Gamma$ is said to be $(K, \varepsilon)$-invariant if 
\[
\frac{|EK\Delta E|}{|E|}<\varepsilon.
\]
\end{definition}

The group $\Gamma$ is amenable if there exists a sequence $(\Gamma_n)$ of finite subsets
of $\Gamma$ such that for any $(K,\varepsilon)$, there is $N$ such that $\Gamma_n$ is $(K,\varepsilon)$-invariant 
for any $n>N$. The sequence $(\Gamma_n)$ is called a F\o lner sequence.

The $K$-interior of a finite set $E\subseteq \Gamma$ is defined as
\[
 \intt_K(E)=\{\gamma\in E:\gamma K\subseteq E\}.
\]

Consider a topological dynamical system $(X,\Gamma)$.
Then the group $\Gamma$ acts (from the left) on the $C^*$-algebra
$C(X)$ by 
\[
\alpha_\gamma(f)=f\circ\gamma.
\]
The (full) crossed product $C^*$-algebra $A=C(X)\rtimes \Gamma$ is 
defined to be the universal $C^*$-algebra generated by $C(X)$ and
unitaries $u_\gamma$, $\gamma\in\Gamma$, with respect to the relations
\[
u_\gamma fu_\gamma^*=f\circ \gamma,\quad u_{\gamma_1}u_{\gamma_2}^*=u_{\gamma_1\gamma_2^{-1}},\quad u_e=1_A, \quad \gamma,\gamma_1,\gamma_2\in\Gamma.
\] 
The $C^*$-algebra $A$ is nuclear if $\Gamma$ is amenable (see, for instance, Corollary
7.18 of \cite{Williams}). If, moreover, $(X,\Gamma)$ is minimal, then $A$ is simple.
(Theorem 5.16 of \cite{Effros} and Th\'{e}or\`{e}me 5.15 of \cite{Zeller}), i.e.
$A$ has no non-trivial closed two-sided ideals.

\begin{definition}
A topological dynamical system $(X,\Gamma)$ is said to have the uniform Rokhlin property (URP)
if for any finite set $F$, and any $\varepsilon>0$, there are open Rokhlin towers $(B_1, F_1,\ldots,(B_S, F_S)$
such that $F_1, F_2,\ldots, F_S$ are $(F,\varepsilon)$-invariant, the sets
\[
B_s\gamma,\quad \gamma\in F_s, s=1,2,\ldots,S
\]
are mutually disjoint, and
\[
\mu(X\setminus \bigsqcup_{s=1}^S\bigsqcup_{\gamma\in F_s}B_s\gamma)<\varepsilon,\quad \mu\in\MC.
\]

\end{definition}

\begin{definition}\cite{NiuZ1}
A topological dynamical system $(X,\Gamma)$ is said to have $(\lambda, m)$-Cuntz comparison
of open sets, where $\lambda\in(0,+\infty)$ and $m\in \mathbb{N}$, if, for any open sets $E,F\subseteq X$
with 
\[
\mu(E)\leq \lambda\mu(F),\quad \mu\in\MC,
\]
one has
\[
\varphi_E\precsim\varphi_F^{\oplus m}.
\]
A topological dynamical system is said to have Cuntz-comparison of open sets (COS) if 
it has $(\lambda,m)$-Cuntz-comparison of open sets for some $\lambda$ and $m$.
\end{definition}

For $a\in A_+$, put $\Her(a)=\overline{aAa}$ and let $p_a$ be its
support projection in $A^{**}$. For a positive continuous function,
write $\osupp(a)=\{x:a(x)>0\}$; $\supp(a)$ denotes closed support.
If $W\subseteq X$ is open, choose $\varphi_W\in C(X)_+$ with
$\osupp(\varphi_W)=W$, and write $\Her(W)=\Her(\varphi_W)$.
Such a function exists because $X$ is metrizable. Functions with the
same open support are Cuntz equivalent already in $C(X)$, and determine
the same hereditary subalgebra in $A$.

Indeed, for $a,b\in C(X)_+$ with
$\osupp(a)\subseteq\osupp(b)$, continuous quotients with a cutoff near
$b=0$ give $a\precsim b$ in $C(X)$.

Uniform smallness of the measure of an open support is used below only
to obtain comparison. The remainder itself is recovered exactly or by
a norm limit; no norm estimate is inferred from a measure estimate.

Throughout Sections~2--5, $\Gamma$ is countably infinite, and the action
is free and minimal with the URP and COS. The finite-group case will be
treated separately in the proof of Theorem~\ref{thm:main}.

An open Rokhlin tower $(U,E)$
has mutually disjoint levels $Ua$, $a\in E$. In a family of towers,
all levels are required to be mutually disjoint. By replacing $U$ with
$Ua_0$ and $E$ with $a_0^{-1}E$, we may assume $e\in E$.

For a right F\o lner sequence $(\Phi_n)$ and a closed set $K\subseteq X$,
write
\[
 \ocap(K)=\limsup_{n\to\infty}\sup_{x\in X}
       \frac{1}{|\Phi_n|}\sum_{\gamma\in\Phi_n}1_K(x\gamma).
\]
On closed sets this agrees with orbit capacity in \cite{LN} and satisfies
\[
 \ocap(K)=\max_{\mu\in\MC}\mu(K)
 \quad\text{\cite[Remark 2.20]{LN}}.
\]

\subsection{Preparation of towers}
\begin{lemma}\label{lem:towers}
Given a finite buffer $L$, $\varepsilon>0$, and $N\in\N$, the URP
towers may be chosen with $|E_s|\geq N$, mutually disjoint closed cores
$B_s a\subset U_s a$, and
\[
 \mu\!\left(\bigsqcup_s\bigsqcup_{a\in E_s}B_s a\right)>1-\varepsilon,
 \qquad |E_s\setminus\intt_L(E_s)|<\varepsilon |E_s|.
\]
There are functions $g_s,\xi_s,f_s\in C_0(U_s)_+$, bounded by $1$,
strictly positive $g_s$ on $U_s$, and equal to $1$ near $B_s$, with
\[
 \supp\xi_s\Subset\intt\{g_s=1\},\qquad
 \supp f_s\Subset\intt\{\xi_s=1\}.
\]
\end{lemma}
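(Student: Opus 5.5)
The plan is to apply the URP with an enlarged finite set and a smaller tolerance, and then shrink the open bases to closed cores by inner regularity, using compactness of $\MC$ to keep the estimate uniform in $\mu$.

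\emph{Choice of towers.} Fix $\delta>0$ small relative to $\varepsilon$. Choose the finite set for the URP to contain $L$ and to be large enough that every $(F,\delta)$-invariant set has at least $N$ elements. Since $\Gamma$ is countably infinite, taking $F$ with $|F|\ge N/(1-\delta)$ forces $|E|\ge N$: if $|E|<N$, then $|EF|\ge|F|$ gives $|EF\Delta E|\ge |F|-|E|$, which is large relative to $|E|$. Invariance of $E_s$ under $(L,\delta')$ gives $|E_s\setminus\intt_L(E_s)|\le \sum_{\ell\in L}|E_s\setminus E_s\ell^{-1}|\le |L|\,|E_s\ell\setminus E_s|$-type bounds, which are below $\varepsilon|E_s|$ once $\delta'<\varepsilon/|L|$. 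The URP then supplies open towers $(U_s,E_s)$, with mutually disjoint levels, such that $\mu(X\setminus\bigsqcup U_sa)<\varepsilon/2$ for every $\mu\in\MC$. After translating, we may assume $e\in E_s$.

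\emph{Closed cores.} This is the main obstacle, since the estimate must hold uniformly over $\MC$. Write $U_s=\bigcup_k K_{s,k}$ as an increasing union of closed sets, for example $K_{s,k}=\{x:d(x,X\setminus U_s)\ge 1/k\}$, which is possible by metrizability. Set
\[
G_k=\bigsqcup_s\bigsqcup_{a\in E_s}\intt(K_{s,k})a.
\]
These are increasing open sets whose union is the full tower union $O$. The functions $\mu\mapsto\mu(G_k)$ are lower semicontinuous and increase to $\mu(O)>1-\varepsilon/2$ on the compact space $\MC$. By Dini/compactness (each $\{\mu:\mu(G_k)>1-\varepsilon\}$ is open, and these sets increase and cover $\MC$), a single $k$ gives $\mu(G_k)>1-\varepsilon$ for all $\mu$. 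Put $B_s=K_{s,k}\subset U_s$. The cores $B_sa\subset U_sa$ are closed and mutually disjoint, and satisfy $\mu\bigl(\bigsqcup B_sa\bigr)\ge\mu(G_k)>1-\varepsilon$. If $\ocap$ is preferred, the same argument works with closed sets via the max formula.

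\emph{Functions.} Using Urysohn's lemma in the metric space $X$, choose nested closed and open sets
\[
B_s\subset W_1\Subset W_2\Subset W_3\Subset U_s,
\]
for instance $W_j=\{x:d(x,X\setminus U_s)>1/(k+j)\}$. Take $f_s$ with $\supp f_s\subset \overline{W_1}$ and $f_s=1$ on $B_s$. Take $\xi_s=1$ on $\overline{W_1}$ with support in $W_2$, and $g_s'=1$ on $\overline{W_2}$ with support in $W_3$. Finally set
\[
g_s=\max\bigl(g_s',\tfrac12\min(1,d(\cdot,X\setminus U_s))\bigr).
\]
This function lies in $C_0(U_s)_+$, is bounded by $1$, is strictly positive exactly on $U_s$, and equals $1$ on $\overline{W_2}\supset$ a neighbourhood of $B_s$. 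The conditions $\supp\xi_s\Subset\intt\{g_s=1\}$ and $\supp f_s\Subset\intt\{\xi_s=1\}$ follow from the strict nesting. Strictly speaking $f_s=1$ only on $B_s$; to get $f_s=1$ near $B_s$ as well, insert one more intermediate set $W_0$ with $B_s\subset W_0\Subset W_1$.
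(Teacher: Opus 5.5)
Your proposal is correct and is essentially the paper's argument. You enlarge the URP set so that invariance forces $|E_s|\ge N$ and $|E_s\setminus\intt_L(E_s)|\le|L|\,|E_sL\setminus E_s|<\varepsilon|E_s|$, use compactness of $\MC$ to choose one uniform inner approximation of the bases, and finish with nested Urysohn functions plus a small strictly positive term for $g_s$; for the compactness step you use lower semicontinuity of $\mu\mapsto\mu(G_k)$ on open sets where the paper uses integrals of continuous approximants. The only detail to tighten is that $\supp f_s$ must lie in the open set $\intt\{\xi_s=1\}$, so take $f_s$ supported inside $W_1$ rather than merely in $\overline{W_1}$ (or make $\xi_s=1$ on a neighbourhood of $\overline{W_1}$), which your strict nesting permits.
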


\begin{proof}
Enlarge the prescribed buffer to a finite set $L'$ containing $e$ and
more than $2N$ elements, and choose the URP error $\delta<1$ sufficiently
small. For any nonempty shape $E$ and any $a\in E$,
\[
 |L'|=|aL'|\leq|EL'|<(1+\delta)|E|,
\]
so $|E|\geq N$. Also
\[
 |E\setminus\intt_L(E)|
 \leq |L|\,|EL\setminus E|
 \leq |L|\,|EL'\setminus E|.
\]
Taking $\delta$ smaller gives the asserted interior estimate.
To obtain cores uniformly in the invariant measures, approximate the
indicator functions of the finitely many open bases increasingly by
continuous contractions of compact support. The corresponding integrals
are continuous on the compact space of invariant measures. Their sets of
values greater than a fixed number below $1$ form an increasing open
cover, so one approximation works for every measure. A level set of that
approximation gives compact cores with the required measure estimate,
after slightly relaxing the error. Discard towers whose chosen cores are empty. The remaining cores still
have the asserted measure estimate, and their translated levels are
disjoint. Urysohn functions give the nested plateaux; adding a small
strictly positive function off the plateau makes $g_s$ strictly positive
on $U_s$ without changing its value near $B_s$.
\end{proof}

\subsection{Tower algebras and exact comparison}
The tower algebra associated with $(U_s,E_s)$ is the image of
\begin{equation}\label{eq:tower-inclusion}
 \iota_s:M_{|E_s|}(C_0(U_s))\longrightarrow A,
 \qquad [c_{ab}]\longmapsto
          \sum_{a,b\in E_s}u_a^*c_{ab}u_b.
\end{equation}
It is an injective $*$-homomorphism. Disjointness of the levels gives
multiplicativity, and its restriction to each diagonal corner is
faithful. Distinct tower algebras are orthogonal.

We use multiplier matrix units $e^s_{ab}$ for this algebra, represented
in $A^{**}$ by
\[
 e^s_{ab}=u_a^*p_{U_s}u_b,
 \qquad e^s_{ab}e^s_{cd}=\delta_{bc}e^s_{ad},
\]
where $p_{U_s}$ is the characteristic projection of $U_s$.
For $c\in C_0(U_s)$, the notation $ce^s_{ab}$ means the actual element
$u_a^*cu_b\in A$. A scalar base function acts as the corresponding
central multiplier of the whole matrix block, not just as a function
on its identity level. In particular, every use of a multiplier below
is accompanied by a coefficient vanishing at the boundary of its base.

\begin{lemma}\label{lem:membership}
Let $C=M_n(C_0(U))$ be a tower algebra, with its multiplier algebra
represented in $A^{**}$. A bounded continuous matrix coefficient
$b\in M(C)$ satisfies $cb,bc\in C\subseteq A$ for every scalar
$c\in C_0(U)$. If $cb=p_\chi(cb)p_\chi$, then
$cb\in\Her(\chi)$.

More generally, let $C_s,C_t$ be two tower algebras, let $T_s,T_t$
be positive scalar elements in them, and let $v_s,v_t$ be bounded multiplier
columns. If $d\in A$ satisfies $T_sd=d=dT_t$, then
\[
 v_s^*dv_t=(T_sv_s)^*d(T_tv_t)\in A.
\]
The same observation applies to translating this coefficient to other
columns. It requires no multiplier matrix unit between different towers.
\end{lemma}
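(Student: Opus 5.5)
We will verify the assertions of Lemma~\ref{lem:membership} directly from the matrix-unit calculus in $A^{**}$, using only the injective $*$-homomorphism $\iota$ of \eqref{eq:tower-inclusion} and the fact that $p_U$ is a central unit for $C_0(U)$ inside $A^{**}$.

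\emph{First assertion.} We interpret $b\in M(C)$ as a matrix $[b_{ab}]$ of bounded continuous functions on $U$, represented as $\sum_{a,b}u_a^*b_{ab}p_Uu_b$. The central-multiplier convention says that a scalar $c\in C_0(U)$ acts as $\sum_a u_a^*cu_a$. Then
\[
cb=\sum_{a,b}u_a^*(c\,b_{ab})u_b ,
\]
and $c\,b_{ab}\in C_0(U)$, because a bounded continuous function times one vanishing at infinity on $U$ vanishes at infinity. Hence $cb=\iota([c\,b_{ab}])\in C\subseteq A$, and $bc\in A$ by symmetry. For the hereditary statement, suppose $x=cb\in A$ and $x=p_\chi xp_\chi$. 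Then $x\in p_\chi A^{**}p_\chi\cap A$. By the standard characterization of the open projection $p_\chi$ (for $a\in A$, $a\in\Her(\chi)$ iff $a=p_\chi ap_\chi$), this equals $\Her(\chi)$. Concretely, one can also approximate: $\chi^{1/n}x\chi^{1/n}\to p_\chi xp_\chi=x$ strictly. Since $x\in A$ and $x^*x,xx^*$ lie in $p_\chi A^{**}p_\chi$, we get $(\chi^{1/n})x\to x$ in norm. The proof of this uses $\|x-\chi^{1/n}x\|^2=\|x^*(1-\chi^{1/n})^2x\|$ together with Dini's theorem applied on the compact spectrum. This gives $x\in\overline{\chi A\chi}$.

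\emph{Second assertion.} From $T_sd=d=dT_t$ we immediately get $v_s^*dv_t=v_s^*T_sdT_tv_t=(T_sv_s)^*d(T_tv_t)$ in $A^{**}$. By the first part, $T_sv_s$ is an honest element of $A$ (a column in $C_s$ multiplied by a scalar in $C_0(U_s)$), and likewise $T_tv_t\in C_t$. So the product is a product of elements of $A$ and lies in $A$. No multiplier matrix unit connecting the two towers ever appears, since the only multipliers are absorbed into $T_s$ and $T_t$ separately.

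\emph{Translation to other columns.} Translating to other columns amounts to replacing $v_s$ by $e^s_{ab}v_s$, with the same absorption. Here one should note $T_s e^s_{ab}=e^s_{ab}T_s$, because scalar base functions are central in the block.

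The only delicate point is the hereditary membership. If the open-projection characterization is not taken as standard, the Dini-type norm estimate above is the step to write out carefully, noting that $x=p_\chi x$ forces $x^*x$ to be supported where $\chi>0$.
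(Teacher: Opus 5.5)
Your proposal is correct and follows the paper's proof. You use the same three ingredients: the bounded-continuous-times-$C_0(U)$ observation, the identity $\Her(\chi)=p_\chi A^{**}p_\chi\cap A$, and absorption of the self-adjoint central scalars $T_s,T_t$ into the columns. In your optional concrete check, Dini's theorem should be applied on the weak$^*$-compact state space of $A$ (not a spectrum) to the decreasing sequence $x^*(1-\chi^{1/n})^2x$, after normalizing $\|\chi\|\le1$. Likewise, $\chi^{1/n}\to p_\chi$ is strong convergence in $A^{**}$, not strict.
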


\begin{proof}
A bounded continuous matrix function multiplied by a function in
$C_0(U)$ again vanishes at infinity. The hereditary assertion follows
from $\Her(\chi)=p_\chi A^{**}p_\chi\cap A$. For the last formula,
$T_sv_s$ and $T_tv_t$ are actual elements of $A$, and scalar
multiplier functions commute with the columns.
\end{proof}

For clarity, $p_{s,a}$ will always mean the multiplier projection
$e^s_{aa}$ of a tower row. Such a projection may be used to describe
support, but its membership in a generated subalgebra will never be
assumed. We call $t_se^s_{ab}$ a \emph{soft matrix unit}: it is an
actual element of $A$, although $e^s_{ab}$ need not be.

\begin{lemma}\label{lem:exact}
The algebra $A$ has stable rank one. If $a,b\in A_+$ and $a\precsim b$,
then there is $v\in A$ with $v^*v=a$ and $vv^*\in\Her(b)$.
If $a,b\in\Her(\chi)$, then $v\in\Her(\chi)$.
\end{lemma}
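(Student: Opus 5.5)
The first assertion is quoted from \cite[Theorem 7.8]{LN}. Under the standing hypotheses of Sections~2--5 ($\Gamma$ countably infinite and amenable, the action free and minimal with the URP and COS), that theorem gives stable rank one for $A=C(X)\rtimes\Gamma$. No further argument is needed beyond checking that these are exactly its hypotheses. Metrizability of $X$ makes $A$ separable, and minimality makes $A$ simple and unital, so the standard consequences of stable rank one for Cuntz comparison are available.

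For the second assertion I would use R{\o}rdam's characterization in stable rank one. Suppose $a\precsim b$ with $a,b\in A_+$. In a unital algebra of stable rank one, Cuntz subequivalence of positive elements is implemented by partial isometries in the bidual. Concretely, the hereditary subalgebra $\Her(a)$ is isomorphic to a hereditary subalgebra of $\Her(b)$ via a polar-decomposition partial isometry $w\in A^{**}$. The map $x\mapsto wxw^*$ sends $\Her(a)$ into $\Her(b)$, and $w^*w$ acts as the identity on $\Her(a)$.

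To get an honest element, put $v=wa^{1/2}$. Then
\[
v^*v=a^{1/2}w^*wa^{1/2}=a,\qquad vv^*=waw^*\in\Her(b).
\]
It remains to see that $v\in A$. In R{\o}rdam's argument $w$ arises as the limit of $u\,(x^*bx)^{1/2}$-type expressions, or alternatively from the polar decomposition $z=w|z|$ with $z\in A$, $|z|$ generating $\Her(a)$. The products $wc$ with $c\in\Her(a)$ lie in $A$, being norm limits of $z\,g(|z|)c$. Alternatively, one can cite directly the standard statement: in stable rank one, $a\precsim b$ holds iff there is $x\in A$ with $x^*x=a$ and $xx^*\in\Her(b)$ (R{\o}rdam; see also Ortega--R{\o}rdam--Thiel). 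I would quote that statement.

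For the last assertion, suppose $a,b\in\Her(\chi)$. Then
\[
v=wa^{1/2}=\lim_n wa^{1/2}\,\chi^{1/n}
\]
in the strict sense, and $v=v\,p_a$ with $p_a\le p_\chi$. Likewise $vv^*\in\Her(b)\subseteq\Her(\chi)$ gives $p_\chi v=v$. So $v=p_\chi vp_\chi\in A$, and hence $v\in p_\chi A^{**}p_\chi\cap A=\Her(\chi)$.

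The only delicate point, and the main obstacle, is making sure the cited stable-rank-one characterization yields an element $v$ of $A$, not merely of $A^{**}$. I would handle it by the factorization $v=wa^{1/2}$, or by quoting the $x^*x=a$ form of R{\o}rdam's result.
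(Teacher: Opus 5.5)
Your proposal is correct and is essentially the paper's argument in open-projection language. The paper obtains, via Coward--Elliott--Ivanescu, an inner-product preserving embedding $U\colon\overline{aA}\to\overline{bA}$ and sets $v=U(a^{1/2})$; this is your $v=wa^{1/2}$, with $U$ given by left multiplication by $w$. For $v\in\Her(\chi)$ the paper uses an approximate unit of $\Her(\chi)$ instead of your equally valid support-projection argument $v=p_\chi vp_\chi$.

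Your heuristic for $wa^{1/2}\in A$ via limits of unitary conjugates is not a proof by itself, because those conjugations only handle $(a-\varepsilon)_+$. The exact step therefore rests on the cited stable-rank-one theorem, either in your Ortega--R{\o}rdam--Thiel form or via CEI as in the paper.
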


\begin{proof}
Stable rank one is \cite[Theorem 7.8]{LN}. We deduce the required
implementation from the Hilbert-module description of the Cuntz
semigroup of Coward, Elliott, and Ivanescu
\cite[Theorem 3 and Appendix 6]{CEI}. Put
\[
 E_a=\overline{aA},\qquad E_b=\overline{bA},\qquad
 \langle x,y\rangle=x^*y.
\]
These are countably generated Hilbert $A$-modules, and $a\precsim b$
means $[E_a]\leq[E_b]$ in the Hilbert-module picture. Since $A$ has
stable rank one, the cited theorem says that this order is realized
by an inner-product preserving $A$-linear embedding $U:E_a\to E_b$.
The element $a^{1/2}$ belongs to $E_a$ by continuous functional
calculus. Hence $v=U(a^{1/2})\in E_b\subseteq A$ satisfies
\[
 v^*v=\langle U(a^{1/2}),U(a^{1/2})\rangle
     =\langle a^{1/2},a^{1/2}\rangle=a.
\]
Choose $x_n\in A$ with $bx_n\to v$. Then
$bx_nx_n^*b\to vv^*$, so $vv^*\in\overline{bAb}=\Her(b)$.
No adjointability of the module embedding is needed.
For the last assertion, both $v^*v$ and $vv^*$ belong to
$\Her(\chi)$. An approximate unit
$(e_n)$ of $\Her(\chi)$ satisfies $e_nv\to v$ and $ve_n\to v$,
since
\[
 \|(1-e_n)v\|^2
   =\|(1-e_n)vv^*(1-e_n)\|\longrightarrow0
\]
and the analogous identity uses $v^*v$ on the other side. Hence
$e_nve_n\to v$.
\end{proof}

\begin{lemma}\label{lem:division}
Let $t$ be a strictly positive contraction in a scalar tower algebra.
If a continuous bounded multiplier coefficient $a$ satisfies
$\|a(x)\|\leq Ct(x)$, then
$\theta_\varepsilon(t)a\to a$ in norm, where $\theta_\varepsilon$ is a
smooth function taking values in $[0,1]$ and
$\theta_\varepsilon=0$ on $[0,\varepsilon]$ and $1$ on
$[2\varepsilon,1]$. Thus expressions involving inverse powers of $t$
can be recovered with continuous functions which vanish near zero,
provided the final coefficient is $O(t)$.
If $c\in\Her(r)$ and $r$ and $rcr$ are recovered, then $c$
is recovered by
\[
 k_\varepsilon(r)(rcr)k_\varepsilon(r)
 =\theta_\varepsilon(r)c\theta_\varepsilon(r)\longrightarrow c,
 \qquad k_\varepsilon(t)=\theta_\varepsilon(t)/t.
\]
\end{lemma}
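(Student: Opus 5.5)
Both claims come down to two-sided uniform estimates on the cutoff $1-\theta_\varepsilon$. Throughout I work with the continuous matrix functions on the base $U$, identified with their images under $\iota$ as in \eqref{eq:tower-inclusion} and Lemma~\ref{lem:membership}. Pointwise norms of multiplier coefficients are then controlled by suprema over $x\in U$.

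\emph{First claim.} I write $a-\theta_\varepsilon(t)a=(1-\theta_\varepsilon(t))a$. The factor $1-\theta_\varepsilon(t)$ is a scalar function with values in $[0,1]$, and it vanishes wherever $t(x)\geq2\varepsilon$. So at each point $x$,
\[
 \|(1-\theta_\varepsilon(t(x)))a(x)\|\leq \mathbf 1_{\{t(x)<2\varepsilon\}}\|a(x)\|\leq C\,t(x)\,\mathbf 1_{\{t(x)<2\varepsilon\}}\leq 2C\varepsilon .
\]
The norm of a bounded continuous multiplier coefficient in the tower algebra equals the supremum of its pointwise matrix norms, because $\iota$ is an injective $*$-homomorphism and is therefore isometric on $M(C)$. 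Taking the supremum gives $\|a-\theta_\varepsilon(t)a\|\leq2C\varepsilon\to0$.

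\emph{Consequence about inverse powers.} Suppose an expression has the form $t^{-k}b$ with final coefficient $a=t^{-k}b=O(t)$. Then
\[
 \theta_\varepsilon(t)a=\bigl(\theta_\varepsilon(t)t^{-k}\bigr)b .
\]
The function $\theta_\varepsilon(s)s^{-k}$ is continuous on $[0,1]$ and vanishes near $0$. Hence $\theta_\varepsilon(t)t^{-k}$ is obtained from $t$ by continuous functional calculus with a function vanishing at $0$, and so it lies in any $C^*$-algebra containing $t$. The first claim then gives convergence to $a$.

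\emph{Second claim.} Here $r\geq0$ and $k_\varepsilon(s)=\theta_\varepsilon(s)/s$ is continuous and vanishes at $0$, so $k_\varepsilon(r)$ lies in $C^*(r)$. Since $k_\varepsilon(r)$ commutes with $r$ and $k_\varepsilon(r)r=\theta_\varepsilon(r)$, we get
\[
 k_\varepsilon(r)(rcr)k_\varepsilon(r)=\theta_\varepsilon(r)c\theta_\varepsilon(r).
\]
The left side lies in the $C^*$-algebra generated by $r$ and $rcr$, which is the sense of ``recovered''.

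It remains to show $\theta_\varepsilon(r)c\theta_\varepsilon(r)\to c$ for $c\in\Her(r)$; this is the only genuinely analytic step. I would first treat the dense case $c=rxr$. In that case
\[
 \|(1-\theta_\varepsilon(r))r\|\leq\sup_s(1-\theta_\varepsilon(s))s\leq2\varepsilon ,
\]
so $\theta_\varepsilon(r)c\to c$ and $c\theta_\varepsilon(r)\to c$ with explicit rates. The product $\theta_\varepsilon(r)c\theta_\varepsilon(r)$ then converges by the triangle inequality.

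For general $c\in\Her(r)=\overline{rAr}$, the family $\theta_\varepsilon(r)$ is uniformly bounded by $1$. A standard $\varepsilon/3$ argument therefore extends the convergence from the dense subset $rAr$ to its closure. This avoids having to check whether $\theta_\varepsilon(r)$ forms an increasing approximate unit, which fails if $\theta_\varepsilon$ is not monotone in $\varepsilon$.
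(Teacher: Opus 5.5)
Your proposal is correct and follows the paper's proof. The first claim is the same pointwise bound $\|(1-\theta_\varepsilon(t))a\|\leq 2C\varepsilon$. For the second claim, you use the identity $k_\varepsilon(r)r=\theta_\varepsilon(r)$ and then check directly the approximate-unit property of $\theta_\varepsilon(r)$ in $\Her(r)$ that the paper only cites: you prove $\|(1-\theta_\varepsilon(r))r\|\leq 2\varepsilon$ on the dense subset $rAr$ and extend to all of $\Her(r)$ by uniform boundedness.
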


\begin{proof}
The first error is at most $2C\varepsilon$. The second convergence is
the approximate-unit property of $\Her(r)$.
\end{proof}

\section{Finite frames and spectral recovery}

\begin{lemma}\label{lem:frame}
Let $A$ be unital and let $D\subseteq A$ be a unital $C^*$-subalgebra.
Suppose $V_1,\ldots,V_M\in D$ and
$S=\sum_jV_j^*V_j$ is invertible in $A$. If $a\in A$ and
$V_jaV_k^*\in D$ for every $j,k$, then $a\in D$, with
\[
 a=S^{-1}\left(\sum_{j,k}V_j^*(V_jaV_k^*)V_k\right)S^{-1}.
\]
For a minimal action and $0\ne\chi\in C(X)_+$, a finite family
$V_\gamma=\chi^{1/2}u_\gamma^*$ always has this property.
\end{lemma}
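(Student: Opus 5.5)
The statement has two parts: an algebraic identity and an existence claim for minimal actions. I would treat them separately.

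For the identity, expand the double sum by linearity:
\[
 \sum_{j,k}V_j^*(V_jaV_k^*)V_k=\Big(\sum_jV_j^*V_j\Big)\,a\,\Big(\sum_kV_k^*V_k\Big)=SaS.
\]
Since $S$ is invertible in $A$, multiplying on both sides by $S^{-1}$ gives the displayed formula. For membership in $D$, observe that $S\in D$ because each $V_j\in D$. A unital $C^*$-subalgebra is inverse closed: this is spectral permanence, since $\sigma_D(S)=\sigma_A(S)$ for unital $C^*$-subalgebras, or directly because $S^{-1}$ is a norm limit of polynomials in $S$, $S$ being positive and invertible. Hence $S^{-1}\in D$. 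Each $V_j^*(V_jaV_k^*)V_k$ lies in $D$ by hypothesis, so the right-hand side lies in $D$, and therefore $a\in D$.

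For the frame property, compute with $V_\gamma=\chi^{1/2}u_\gamma^*$:
\[
 V_\gamma^*V_\gamma=u_\gamma\chi u_\gamma^*=\chi\circ\gamma.
\]
Because the covariance relation in the paper is $u_\gamma fu_\gamma^*=f\circ\gamma$, we get
\[
 S=\sum_{\gamma\in F}\chi\circ\gamma\in C(X)_+.
\]
An element of $C(X)$ is invertible exactly when it has no zeros, so it suffices to find a finite $F$ with $\sum_{\gamma\in F}\chi(x\gamma)>0$ for every $x$. Let $W=\osupp(\chi)$, which is nonempty and open since $\chi\neq0$. Set $Y=X\setminus\bigcup_{\gamma\in\Gamma}W\gamma^{-1}$. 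This set is closed and $\Gamma$-invariant, and it misses $W$, so $Y\neq X$. Minimality then forces $Y=\emptyset$, so the sets $W\gamma^{-1}$ cover $X$. Compactness gives a finite subcover indexed by some finite $F$. Then for each $x$ there is $\gamma\in F$ with $x\gamma\in W$, so $\chi(x\gamma)>0$, and $S$ is strictly positive and hence invertible in $C(X)\subseteq A$.

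The only point needing care is the left/right convention, that is, whether $u_\gamma\chi u_\gamma^*$ is $\chi\circ\gamma$ or $\chi\circ\gamma^{-1}$. Either convention gives the same conclusion, because the translates still cover $X$ by minimality; only the indexing set changes, to $F$ or $F^{-1}$. No step is a genuine obstacle.
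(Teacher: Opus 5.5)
Your proposal is correct and follows essentially the same route as the paper: the double sum collapses to $SaS$, spectral permanence gives $S^{-1}\in D$, and minimality plus compactness give finitely many translates of $\{\chi>0\}$ covering $X$, so $S=\sum_{\gamma\in F}\alpha_\gamma(\chi)$ is strictly positive and hence invertible. The paper additionally enlarges $F$ to a symmetric set containing $e$; this is convenient for later use but is not needed for the lemma.
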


\begin{proof}
Spectral permanence gives $S^{-1}\in D$, and the middle sum equals
$SaS$. For the last assertion, translates of $\{\chi>0\}$ cover $X$
by minimality. A finite subcover, enlarged to a symmetric finite set
$F$ containing the identity, gives
$S=\sum_{\gamma\in F}\alpha_\gamma(\chi)>0$ everywhere. Compactness
then gives $S\geq\kappa1$ for some $\kappa>0$.
\end{proof}

\begin{lemma}\label{lem:filter}
Let $H=H^*$ in a unital $C^*$-algebra and $x$ be any element.
For $F\in C_c^\infty(\mathbb R^2)$ define
\[
 \Phi_F^H(x)=\frac1{(2\pi)^2}\iint \widehat F(s,t)
       e^{isH}xe^{itH}\,ds\,dt,
 \quad
 \widehat F(s,t)=\iint F(a,b)e^{-i(sa+tb)}\,da\,db.
\]
Then $\Phi_F^H(x)\in C^*(1,H,x)$. If, in a tower block,
$H=t\diag(\lambda_i)$ and $x=a e_{ij}$, then
\[
 \Phi_F^H(x)=F(\lambda_it,\lambda_jt)x.
\]
If $Hx=\lambda x$ and $x=xp_J(H)$ for a spectral interval $J$, then
$\Phi_F^H(x)=xF(\lambda,H)p_J(H)$.
\end{lemma}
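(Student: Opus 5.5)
The plan is to regard $\Phi_F^H$ as the Schur-type (double operator integral) calculus attached to the pair of commuting operators ``left multiplication by $H$'' and ``right multiplication by $H$.'' Each of the three assertions then reduces to an evaluation of one Fourier inversion.

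First I will show that the integral is well defined and lies in $C^*(1,H,x)$. Since $F\in C_c^\infty(\mathbb R^2)$, its transform $\widehat F$ is a Schwartz function, hence in $L^1(\mathbb R^2)$. The integrand $(s,t)\mapsto \widehat F(s,t)e^{isH}xe^{itH}$ is norm continuous, because $s\mapsto e^{isH}$ is norm continuous for bounded $H$. Its norm is bounded by $|\widehat F(s,t)|\,\|x\|$. The integral therefore converges as a Bochner (or Riemann) integral in norm. Each integrand value lies in $C^*(1,H,x)$, since $e^{isH}$ is given by continuous functional calculus of $H$ in the unital algebra $C^*(1,H)$. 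A closed subspace contains every norm-convergent integral of functions taking values in it, so $\Phi_F^H(x)\in C^*(1,H,x)$.

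The key step is the eigenvector computation. Suppose $Hx=\lambda x$, i.e.\ $x$ is a left eigenvector. Then $e^{isH}x=e^{is\lambda}x$, so
\[
 \Phi_F^H(x)=x\,\frac1{(2\pi)^2}\iint\widehat F(s,t)e^{is\lambda}e^{itH}\,ds\,dt .
\]
The remaining integral is a continuous function $G$ of the self-adjoint element $H$, where $G(b)=\frac1{(2\pi)^2}\iint\widehat F(s,t)e^{i(s\lambda+tb)}\,ds\,dt$. Fourier inversion, with the normalization given in the statement, yields $G(b)=F(\lambda,b)$. To justify passing the integral through the functional calculus, I will apply a state or use the spectral theorem in a faithful representation: for a spectral measure of $H$, Fubini applies because $\widehat F\in L^1$. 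This gives $\Phi_F^H(x)=xF(\lambda,H)$. If moreover $x=xp_J(H)$, then $p_J(H)$ commutes with $F(\lambda,H)$, so the result equals $xF(\lambda,H)p_J(H)$. The projection lives in $A^{**}$, which is harmless because the identity is checked in the bidual.

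For the tower case, take $H=t\,\diag(\lambda_i)$ and $x=ae_{ij}$ with $t$ a scalar function. Both sides are matrices over $C_0(U)$, so I will verify the formula pointwise at each $x_0\in U$, through the point-evaluation representations of the block; the elements are supported in the block, so no information is lost. At the point $x_0$, $H$ is the diagonal matrix $t(x_0)\diag(\lambda_i)$. We get $e^{isH}ae_{ij}e^{itH}=e^{i(s\lambda_it(x_0)+t\lambda_jt(x_0))}a(x_0)e_{ij}$, and Fourier inversion gives $F(\lambda_it(x_0),\lambda_jt(x_0))a(x_0)e_{ij}$. The notation clashes here: the integration variable $t$ must be renamed to avoid confusion with the function $t$.

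The only delicate point is this reconciliation of the multiplier picture. $H$ need not belong to the block, whereas $e^{isH}$ acts on the block by multiplication in $A^{**}$. I expect this to be routine: $x$ lies in the tower algebra, and $H$ acts there as a diagonal multiplier, so the pointwise evaluation applies.
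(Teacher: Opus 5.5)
Your proposal is correct and follows the paper's own, very terse, argument. The norm Bochner integral with integrable $\widehat F$ gives membership in $C^*(1,H,x)$. Both formulas then come from Fourier inversion: pointwise in the scalar tower algebra for the tower case, and through the continuous functional calculus of the right copy of $H$ (after $e^{isH}x=e^{is\lambda}x$) for the eigenvector case. Your extra justifications only fill in steps the paper leaves implicit: Fubini against spectral measures or characters of $C^*(1,H)$, the commutation of $p_J(H)$ with $F(\lambda,H)$ in $A^{**}$, and reducing the action of $e^{isH}$ on the block to that of $e^{is\,t\diag(\lambda_i)}$.
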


\begin{proof}
The Fourier transform is integrable, so the displayed integral is a norm
Bochner integral of elements of $C^*(1,H,x)$. The two formulas follow
from Fourier inversion, first in the scalar tower algebra and then in
the continuous functional calculus of the right copy of $H$.
\end{proof}

\begin{lemma}\label{lem:labels}
\begin{enumerate}
\item A finite family of labels $\lambda_1,\ldots,\lambda_N$ in
$(2,2.1)$ can be chosen so that all ratios
$\lambda_i/\lambda_j$, $i\ne j$, are distinct.
\item Let $C=M_N(C_0(U))$ be a tower algebra, let $t\in C_0(U)$
be a strictly positive contraction, and suppose that $H=H^*\in A$ acts on this
block as $t\diag(\lambda_i)$. Let
\[
 x=\sum_{(i,j)\in J}a_{ij}e_{ij},\qquad
 J\subseteq\{(i,j):i\ne j\},
\]
with distinct ordered positions, scalar coefficients $a_{ij}\in C_0(U)$,
and $|a_{ij}|\leq C_{ij}t$. Then each $a_{ij}e_{ij}$ belongs to
$C^*(1,H,x)$. The same conclusion holds for finitely many orthogonal
tower blocks if the labels are chosen together.
\end{enumerate}
The statement in (2) concerns off-diagonal positions only.
\end{lemma}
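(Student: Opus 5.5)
For part (1), I would choose the labels inductively. Suppose $\lambda_1,\ldots,\lambda_{k}$ in $(2,2.1)$ already have pairwise distinct ratios $\lambda_i/\lambda_j$, $i\ne j$. A new label $\lambda$ creates a coincidence only if one of finitely many equations holds:
\[
\lambda/\lambda_j=\lambda_i/\lambda_l,\qquad
\lambda_j/\lambda=\lambda_i/\lambda_l,\qquad
\lambda/\lambda_j=\lambda_l/\lambda,\qquad
\lambda/\lambda_j=\lambda/\lambda_l \ (j\ne l).
\]
The last equation is impossible. Each of the others determines at most two values of $\lambda>0$. So only finitely many values are excluded, and the interval is uncountable. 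The same argument also excludes the values $\lambda/\lambda_j=1$, and that equation forces $\lambda=\lambda_j$. For several orthogonal blocks I would simply choose one global list of labels, distinct across all blocks. Since the blocks are orthogonal, cross-block positions never occur in $x$, so only ratios within a single block matter. Choosing globally distinct ratios is still harmless.

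For part (2), I would use the filter of Lemma~\ref{lem:filter}. By that lemma,
\[
\Phi_F^H(x)=\sum_{(i,j)\in J}F(\lambda_it,\lambda_jt)\,a_{ij}e_{ij}.
\]
Fix a position $(i_0,j_0)\in J$ and put $\rho=\lambda_{i_0}/\lambda_{j_0}$. By (1), $\rho\ne\lambda_i/\lambda_j$ for every other $(i,j)\in J$; here the ordered positions are off-diagonal and distinct. Let $\delta>0$ be smaller than half the minimal gap between $\rho$ and these other ratios. For $\varepsilon>0$, I would take $F_\varepsilon\in C_c^\infty(\mathbb R^2)$ with values in $[0,1]$ satisfying two conditions.
\begin{itemize}
\item[(a)] $F_\varepsilon$ vanishes where $|b|\le\varepsilon$ or $|a/b-\rho|\ge\delta$.
\item[(b)] $F_\varepsilon=1$ on $\{(\lambda_{i_0}s,\lambda_{j_0}s):2\varepsilon\le s\le1\}$.
\end{itemize}
This is possible because that segment is compact and stays away from the excluded closed sets. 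On the diagonal ray $(\lambda_is,\lambda_js)$ with $(i,j)\ne(i_0,j_0)$, the ratio is $\lambda_i/\lambda_j$, so $F_\varepsilon$ vanishes there. Therefore
\[
\Phi_{F_\varepsilon}^H(x)=F_\varepsilon(\lambda_{i_0}t,\lambda_{j_0}t)\,a_{i_0j_0}e_{i_0j_0},
\]
and this element lies in $C^*(1,H,x)$.

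It remains to remove the cutoff. The coefficient $F_\varepsilon(\lambda_{i_0}t,\lambda_{j_0}t)$ equals $1$ where $t\ge2\varepsilon$ and lies in $[0,1]$ everywhere. Hence the error is at most
\[
\sup_{t<2\varepsilon}|a_{i_0j_0}|\le 2C_{i_0j_0}\varepsilon,
\]
in the spirit of Lemma~\ref{lem:division}. Letting $\varepsilon\to0$ and using that $C^*(1,H,x)$ is closed gives $a_{i_0j_0}e_{i_0j_0}\in C^*(1,H,x)$. This norm estimate is exactly where the hypothesis $|a_{ij}|\le C_{ij}t$ enters.

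The main obstacle is justifying the formula for $\Phi_F^H(x)$ inside $A$ rather than only in the block. Here $H$ is only assumed to act on the block as $t\diag(\lambda_i)$, and $x$ lies in the block. I would argue that $e^{isH}x=e^{is\,t\diag(\lambda)}x$ by checking it on polynomials: $H^nx=(t\diag\lambda)^nx$, because $Hx$ is determined by the block action. The same holds for right multiplication. Then the computation of Lemma~\ref{lem:filter} applies pointwise in the base variable. For several orthogonal blocks, $x$ is a sum of block components, $H$ acts blockwise, and the same filter $F_\varepsilon$ isolates a single position in a single block, since the global labels make that ratio unique.
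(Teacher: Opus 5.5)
Your proof is correct. Part (2) follows the paper's argument. The paper also applies Lemma~\ref{lem:filter} with a smooth compactly supported filter that lives only on the selected ray: it equals $\theta_\varepsilon(\tau)$ along the ray and vanishes on all other rays. It then removes the origin cutoff using $|a_{ij}|\le C_{ij}t$, with the same error bound $2C_{ij}\varepsilon$. Your filter, supported in a narrow cone about the selected ray and vanishing on $\{|b|\le\varepsilon\}$, is just an explicit way of building such a function.

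Where you genuinely differ is part (1). The paper gives the closed formula $\lambda_i=2\exp(c2^i)$ with $c=\log(21/20)/2^{N+1}$. Then $\lambda_i/\lambda_j=\exp\bigl(c(2^i-2^j)\bigr)$, and distinctness reduces to a combinatorial fact: a positive difference $2^i-2^j$ determines $j$ through its power of $2$ and $i$ through its odd part. Your inductive argument instead excludes finitely many bad values at each step. It is quicker to verify, and any further finite list of exclusions can be added at no cost. The paper's version is constructive and produces an explicit family.

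One phrasing point concerns your remark that ``only ratios within a single block matter.'' It should not suggest that distinctness within each block separately would suffice. The filter sees only joint spectral values, so it cannot tell blocks apart, and ratios of in-block label pairs must be distinct across different blocks. Your global choice of labels and your final sentence use exactly this, so the argument itself is fine.
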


\begin{proof}
For (1), choose
\[
 \lambda_i=2\exp(c2^i),\qquad
 c=\frac{\log(21/20)}{2^{N+1}},\qquad 1\leq i\leq N.
\]
These labels lie in $(2,2.1)$. The nonzero differences $2^i-2^j$
are distinct for distinct ordered pairs: for a positive difference,
its power of $2$ determines the smaller index, and its odd part
determines the larger index. Negative differences are treated by
reversing the pair. This proves the ratio assertion.

For (2), the rays
$R_{ij}=\{(\lambda_i\tau,\lambda_j\tau):0\leq\tau\leq1\}$
meet only at the origin. Fix $\varepsilon>0$. A smooth compactly
supported function on $\mathbb R^2$ can be chosen equal to
$\theta_\varepsilon(\tau)$ on the selected ray and zero on all other
rays, where $\theta_\varepsilon$ is the cutoff of
Lemma~\ref{lem:division}. One obtains it by a smooth extension along
the ray and a narrow tubular cutoff on its portion away from zero.
Lemma~\ref{lem:filter} gives
$\theta_\varepsilon(t)a_{ij}e_{ij}\in C^*(1,H,x)$.
Its distance from $a_{ij}e_{ij}$ is at most
$2C_{ij}\varepsilon$. The finite-block assertion uses the same
construction with the globally distinct ratios.
\end{proof}

\begin{lemma}\label{lem:soft}
Let $(e_{ij})_{i,j=1}^N$, $N\geq2$, be multiplier matrix units of a matrix algebra over
$C_0(U)$, and let $t\in C_0(U)$ be a strictly positive contraction.
If a $C^*$-subalgebra $D\subseteq A$ contains $te_{ij}$ for the edges
of a connected graph on the rows, then it contains every $te_{ij}$
and every $f(t)e_{ij}$ with $f$ continuous and $f(0)=0$.
\end{lemma}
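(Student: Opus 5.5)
The plan is to use only products, adjoints, continuous functional calculus and norm limits inside $D$. All computations take place in the tower block $M_N(C_0(U))$, where these operations act coordinatewise through the multiplier matrix units.

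\emph{Step 1: diagonal corners.} Since the graph is connected and $N\geq2$, every row $i$ is an endpoint of some edge. So $D$ contains $te_{ij}$ or $te_{ji}$ for some $j\neq i$, and by taking adjoints it contains both (as $t$ is real). Then
\[
(te_{ij})(te_{ij})^*=t^2e_{ii}\in D .
\]
This is a positive element of the corner $C_0(U)e_{ii}$. Continuous functional calculus on it gives $g(t^2)e_{ii}\in D$ for every continuous $g$ on $[0,1]$ with $g(0)=0$. Since $t\geq0$, every continuous $f$ with $f(0)=0$ can be written as $f(\tau)=g(\tau^2)$ with $g(0)=0$. Hence $f(t)e_{ii}\in D$ for all $i$ and all such $f$.

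\emph{Step 2: all off-diagonal positions.} Fix $i\neq k$ and a path $i=i_0,i_1,\dots,i_m=k$ in the graph. Multiplying the edge elements (or their adjoints) along the path gives $t^me_{ik}\in D$. To reduce the power of $t$, take the cutoff $\theta_\varepsilon$ of Lemma~\ref{lem:division} and put $h_\varepsilon(\tau)=\theta_\varepsilon(\tau)\tau^{1-m}$. This function is continuous on $[0,1]$ and vanishes near $0$. By Step 1, $h_\varepsilon(t)e_{ii}\in D$, and therefore
\[
h_\varepsilon(t)e_{ii}\cdot t^me_{ik}=\theta_\varepsilon(t)t\,e_{ik}\in D .
\]
Its distance from $te_{ik}$ is at most $\sup_\tau|1-\theta_\varepsilon(\tau)|\tau\leq2\varepsilon$. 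Since $D$ is norm closed, letting $\varepsilon\to0$ gives $te_{ik}\in D$.

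\emph{Step 3: general coefficients.} Let $f$ be continuous with $f(0)=0$. Put $g_\varepsilon(\tau)=\theta_\varepsilon(\tau)f(\tau)/\tau$, which is continuous and vanishes near $0$. By Step 1, $g_\varepsilon(t)e_{ii}\in D$, and by Step 2, $te_{ij}\in D$. Their product is
\[
g_\varepsilon(t)e_{ii}\cdot te_{ij}=\theta_\varepsilon(t)f(t)e_{ij}\in D .
\]
Its distance from $f(t)e_{ij}$ is at most $\sup_{\tau\leq2\varepsilon}|f(\tau)|$, which tends to $0$ because $f$ is continuous with $f(0)=0$. Hence $f(t)e_{ij}\in D$. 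The diagonal case $i=j$ is already covered by Step 1.

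\emph{Main obstacle.} The delicate point is legitimacy rather than computation: each product must be an actual element of $A$, not merely a multiplier. This holds because in every product at least one factor carries a scalar coefficient in $C_0(U)$, so Lemma~\ref{lem:membership} applies. The other point to check is that the functional calculus in Step 1 really stays in the single corner. It does, because $t^2e_{ii}$ is a normal element of the commutative algebra $C_0(U)e_{ii}$, and the continuous functions vanishing at $0$ (hence the rescaling functions used) act on it coordinatewise.
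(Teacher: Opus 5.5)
Your proof is correct and follows the paper's argument essentially step for step. You obtain the diagonal corners from $xx^*=t^2e_{ii}$ and compose with $\tau\mapsto\tau^2$, where the paper takes square roots to get $te_{ii}$ directly. From there, as in the paper, you correct the path products $t^me_{ik}$ by the cutoff $\theta_\varepsilon(t)t^{1-m}$ on the diagonal, and obtain $f(t)e_{ij}$ as the limit of $\theta_\varepsilon(t)f(t)t^{-1}e_{ii}\cdot te_{ij}$ using continuity of $f$ at zero.
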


\begin{proof}
For an edge $x=te_{ij}$, the positive square roots of $x^*x$ and
$xx^*$ are $te_{jj}$ and $te_{ii}$. Products along a path give
$t^ke_{ij}$. Multiplying by the continuous function
$\theta_\varepsilon(t)t^{1-k}$ of the known diagonal $te_{ii}$,
extended by zero near zero, gives
$\theta_\varepsilon(t)te_{ij}\to te_{ij}$ in norm.
Continuous functional calculus recovers $f(t)e_{ii}$.
Multiplication by
$k_\varepsilon(te_{ii})(te_{ij})$, where
$k_\varepsilon(\tau)=\theta_\varepsilon(\tau)/\tau$, then gives
$\theta_\varepsilon(t)f(t)e_{ij}\to f(t)e_{ij}$.
The last convergence uses continuity at zero, rather than a
boundedness assumption on $f(t)/t$.
\end{proof}

\begin{remark}\label{rem:filter-caution}
We use smooth functions with integrable Fourier transforms, not an
unjustified continuous two-variable functional calculus on $A$.
No uniform bound on the filters as $\varepsilon\downarrow0$ is needed:
at each stage all unwanted entries are annihilated exactly and the
remaining scalar cutoff error tends to zero in norm. Fixed point
entries are assigned positions off every occupied ray, including the
reverse rays. Entries on a vertical or horizontal axis are recovered
later by row cutoffs. Diagonal entries all lie on the ray of ratio
$1$, so the off-diagonal separation lemma makes no assertion about
separating them. In the constructions below, every diagonal
remainder corner is first moved to an unused off-diagonal position.
\end{remark}

\section{Recovery of a finite Fourier family}

\begin{proposition}\label{prop:finite}
Let $\Gamma$ be a countably infinite amenable group acting freely and
minimally on the compact metrizable space $X$, and suppose the action has
URP and COS. For every finite family $a_1,\ldots,a_q$ of self-adjoint
finite Fourier sums in $A$, there are $H_0\in C(X)_+$ and
$K_0\in A_\sa$ such that $D_0=C^*(1,H_0,K_0)$ contains all the $a_i$.
There is a nonempty open set $W\subset X$ such that, for every
$P_1,P_2,P_3\in\Her(W)_\sa$, one can replace $K_0$ by a
self-adjoint $K\in A$ satisfying
\[
 D_0\subset C^*(1,H_0,K),\qquad
 P_j\in C^*(1,H_0,K)\quad(j=1,2,3).
\]
The three additional entries occupy distinct previously unused
off-diagonal matrix positions of one tower block.
\end{proposition}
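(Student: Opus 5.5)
I will follow the Olsen--Zame scheme sketched in the introduction, carried out inside one large tower family. Let $F_0\subset\Gamma$ be the finite, symmetric union of the supports of the Fourier sums $a_i=\sum_{\gamma}u_\gamma^*c_{i,\gamma}$ ($c_{i,\gamma}\in C(X)$), enlarged by $e$. Apply Lemma~\ref{lem:towers} with buffer $L=F_0$, a small $\varepsilon$, and $N$ large enough that each block has many more off-diagonal positions than needed, say $N>10(q|F_0|+10)$. The algebra $C^*(1,H_0,K_0)$ will contain everything. I define the diagonal generator
\[
H_0=\sum_s\sum_{a\in E_s}\lambda^s_a\,\alpha_a(g_s)\;+\;h_{\mathrm{rem}},
\]
with labels chosen by Lemma~\ref{lem:labels}(1) jointly over all blocks, so all ratios are distinct. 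Here $h_{\mathrm{rem}}\in C(X)_+$ is supported away from the plateaux of the $\xi_s$ and takes values in a disjoint interval, for instance $[0,1]$. Then $H_0\in C(X)_+$, and on each block it equals $t_s\diag(\lambda^s_a)$ with $t_s=g_s$ on the $\xi_s$-plateau. By spectral filtering (Lemma~\ref{lem:filter}), the interval $(2,2.1)t$ separates the tower part from the remainder.

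Next I split each $a_i$. On interior rows $a\in\intt_{F_0}(E_s)$, the term $u_\gamma^*c_{i,\gamma}$ cut down by $f_s$-levels is a genuine tower matrix entry $c\,e^s_{a,a\gamma}$ with coefficient $O(t_s)$. The remainder $r_i=a_i-\sum_s\iota_s(\cdot)$ is supported by the complement $R$ of the interior cores, an open set with $\mu(R)<C\varepsilon$ uniformly in $\mu\in\MC$, since boundary rows are an $\varepsilon$-fraction and the cores have measure $>1-\varepsilon$.

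Set $\lambda$ small relative to the COS constant. The COS hypothesis then gives $\varphi_R\precsim\varphi_{W_0}^{\oplus m}$, where $W_0$ is a union of $m'$ levels $U_sa$ at chosen free rows of one block. Lemma~\ref{lem:exact} (stable rank one) then produces a partial isometry-like $v$ with $v^*v=\varphi_R$ and $vv^*\in\Her$ of those rows. Through $v$, each corner $\varphi_R^{1/2}r_i\varphi_R^{1/2}$, together with the mixed corners against the tower part, is transported into corners living on specified rows. These are placed at distinct off-diagonal positions $(j,k)$ with the soft matrix units $t e_{jk}$ as links (Lemma~\ref{lem:membership}). A connected graph of soft units $t e_{j,j+1}$ is also encoded, so that Lemma~\ref{lem:soft} recovers all $f(t)e_{jk}$.

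$K_0$ is then the self-adjoint sum of all these off-diagonal codes: the Fourier coefficients, the chain $te_{j,j+1}$, the transported pieces, and $v$ itself at one position. Recovery proceeds in stages. Lemma~\ref{lem:labels}(2) extracts each off-diagonal entry separately. Lemma~\ref{lem:soft} moves entries around. Lemma~\ref{lem:division} divides out $t$ and the cutoffs $\varphi_R$, and the recovered $v$ undoes the transport in norm. Summing gives back $a_i$.

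The unused set $W$ is taken to be the base level of three further free rows, chosen in advance. Given $P_1,P_2,P_3\in\Her(W)_\sa$, I place $P_j$ composed with soft units $te_{j_0,k_j}$ at three fresh off-diagonal positions, for instance $te_{k_j,j_0}P_j\,te_{j_0,k'_j}$ as an entry $(k_j,k'_j)$. The positions avoid all occupied rays and their reverses, which is possible because the ratios are globally distinct. Adding these entries to $K_0$ gives $K$. The filters of Lemma~\ref{lem:labels} still isolate every old entry exactly, so $D_0\subset C^*(1,H_0,K)$. Each $P_j$ is then recovered with Lemma~\ref{lem:division}, since $P_j\in\Her(W)$ and $W$ lies in the plateau where $t$ is known.

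The main obstacle is the transport of the remainder. Only a measure estimate is available, never a norm estimate. The remainder's corners must therefore be recovered exactly: the mixed corners between $R$ and the tower support, and not just $\varphi_Rr_i\varphi_R$, which requires frame-type bookkeeping with $\varphi_R$ and $1-\varphi_R$. The cutoff $\varphi_R$ must itself be recoverable, say as a function of $H_0$ on the separate spectral interval $[0,1]$. I would first try to make $\varphi_R$ a function of $H_0$ and code $v$ and $v r_i v^*$ separately, then verify the cutoff limits via Lemma~\ref{lem:division}.
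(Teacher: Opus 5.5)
\textbf{There is a genuine gap.} Your overall scheme is the paper's: tower blocks, a labelled diagonal $H_0$, off-diagonal coding, and transport of the remainder by a comparison element $v$. But the two steps you leave open are exactly where the construction must differ, and as written they fail.

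\emph{First, the splitting.} If you cut $a_i$ down to the interior rows, the remainder $r_i$ has corners between $R$ and the tower part. These are not in $\Her(\varphi_R)$. Since $v^*(vcv^*)v=\varphi_Rc\varphi_R$, and $\varphi_R$ vanishes at an endpoint in a core, conjugation by $v$ destroys them. The paper does not add bookkeeping; it puts the remainder on the compressed side. With $I_s=\intt_L(E_s)$, $h=\sum_s\sum_{a\in I_s}g_se^s_{aa}$ and $r=1-h$, it sets $c_i=r^{1/2}a_ir^{1/2}\in\Her(r)$ and $b_i=a_i-c_i$. For a Fourier term $f_\gamma u_\gamma$, the coefficient of $b_i$ is $f_\gamma(1-r^{1/2}\alpha_\gamma(r^{1/2}))$. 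This is nonzero only if one endpoint lies on an $L$-interior level, and symmetry of $L$ keeps the other endpoint in the same tower. The coefficient is then $0$, $g_s$, or $1-\sqrt{1-g_s}$. So $b_i$ lies \emph{exactly} in the tower algebra with $O(g_s)$ coefficients, and there are no mixed corners. Then $c_i$ is recovered from $v^*d_iv=rc_ir$, with $d_i=vc_iv^*$, by Lemma~\ref{lem:division}.

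A related point: the COS target cannot consist of levels of one block. The mass $n_s\mu(U_s)$ of a single tower has no lower bound, uniform over $\MC$, in terms of the previously fixed $\varepsilon$. The paper reserves a fraction $\rho$ of the interior rows in every block, which gives $\mu(\{V>0\})>\rho(1-\delta)/(2m)$ for all $\mu$. Consequently it must also code corners of $d_i$ between different blocks.

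\emph{Second, extracting $v$.} Your $H_0$ gives no way to isolate $v$ from $K_0$. You label every row, boundary rows included, by $\lambda_a g_s$ with $\lambda_a\in(2,2.1)$. The boundary levels lie in $R$ and contain points where $g_s=1$. Hence the source of $v$ carries $H_0$-values in $[2,2.1]$, and the joint type of $v$ meets the high--high points and rays, for instance the plateau ends of the chain rays. Adding $h_{\mathrm{rem}}$ does not repair this: wherever it meets tower levels it destroys the form $t\,\diag(\lambda_a)$ that Lemma~\ref{lem:labels} needs. In any case you do not need $\varphi_R$ as a function of $H_0$.

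The missing device is the two-scale cutoff $t_s=\eta g_s+(1-\eta)\xi_s$, together with low labels $\nu_{s,b}<\eta$ on boundary rows. This $t_s$ is at most $\eta$ wherever $g_s<1$ and equals $1$ on the reserved supports. Then $p_r\le 1_{[0,2.1\eta]}(H_0)$, so $v$ has the mixed type $\{\lambda_i\}\times[0,2.1\eta]$, disjoint from every high--high type. The paper first recovers all high--high entries, including one extra coded entry $t_sg_se^s_{pq}$ that yields the row cutoffs $Q_{s,a}=\xi_se^s_{aa}$. It then gets $v=\sum Q_{s,a}\chi(H_0)(K_0-K_{\mathrm{hh}})$, summed over reserved rows, and $r=v^*v$ comes for free. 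The low boundary labels in turn require separate links $t_se^s_{j(b),b}$ to connect boundary rows to the chain.

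Your treatment of the three vacant positions matches the paper's. However, it too depends on the points $(\lambda_{p_j},\lambda_{q_j})$ avoiding the type of $v$, which only this spectral design guarantees.
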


\begin{proof}
We give the construction and then verify recovery in its required
order. All matrix units below are multiplier matrix units; every
expression asserted to belong to $A$ has a coefficient in the
appropriate $C_0(U_s)$.
\smallskip\noindent\emph{(1) Compression and comparison.}
Enlarge a common Fourier support to a finite symmetric set $L$ containing
the identity, and write $\ell=|L|$. We may suppose $q\geq1$. Let
$\lambda,m$ be the COS constants. Fix
\[
 0<\rho\leq\frac{1}{100\sqrt{q+1}},\qquad
 0<\delta<\min\left\{\frac1{100},\frac\rho{100},
                         \frac{\lambda\rho}{100m}\right\},
 \qquad 0<\eta<\frac1{100}.
\]
Choose the towers of Lemma~\ref{lem:towers} with buffer $L$, error
$\delta$, and shapes of size
\[
 n_s=|E_s|\geq N,\qquad
 N\geq100(q\ell+q+10)+\frac{2m}{\rho}.
\]
Put $I_s=\intt_L(E_s)$ and $d_s=|I_s|$. In their tower algebra
$B=\bigoplus_s M_{n_s}(C_0(U_s))$, set
\[
 h=\sum_s\sum_{a\in I_s}g_s e^s_{aa},\qquad r=1-h,
 \qquad c_i=r^{1/2}a_i r^{1/2},\qquad b_i=a_i-c_i.
\]
The diagonal functions in this formula are extended by zero off their
tower levels. Since $h=1$ on every interior closed core,
\begin{equation}\label{eq:finite-small}
 \mu(\{r>0\})<2\delta
 \quad\text{for every invariant probability measure }\mu.
\end{equation}
Indeed, discarding boundary cores loses at most
$\delta\sum_s n_s\mu(B_s)\leq\delta$ in addition to the core error.

We have $b_i\in B$ exactly. For a Fourier term $f_\gamma u_\gamma$,
the coefficient of its difference from the compression by $r^{1/2}$ is
\[
 f_\gamma\bigl(1-r^{1/2}\alpha_\gamma(r^{1/2})\bigr).
\]
Whenever this coefficient is nonzero, one endpoint is an $L$-interior
level. Symmetry of $L$ places the other endpoint in the same tower:
if the range row is $a$, its source row is $a\gamma$; conversely, a
source row $b$ has range row $b\gamma^{-1}$. In base coordinates the scalar multiplier is $0$, $g_s$, or
$1-\sqrt{1-g_s}$. Since $1-\sqrt{1-g_s}\leq g_s$, all three are
bounded by $g_s$. Consequently each
block of each $b_i$ has at most $\ell n_s$ possibly nonzero matrix
coefficients, and every such coefficient $\beta$ satisfies
$|\beta|\leq C_i g_s$ for a fixed finite constant $C_i$.

In each $I_s$ reserve $k_s=m\lfloor\rho n_s/m\rfloor$ rows and divide
them into $m$ groups of equal size $k_s/m$. The groups in different
blocks are combined using the same group index. Let $V$ be the Cartan
function obtained by putting $f_s$ in the first group of each block,
and let $R$ put $f_s$ in all reserved rows. For each invariant $\mu$,
\[
 \mu(\{V>0\})\geq\frac\rho{2m}\sum_s n_s\mu(B_s)
 >\frac\rho{2m}(1-\delta).
\]
Our choices imply $2\delta<\lambda\rho(1-\delta)/(2m)$.
Thus COS and~\eqref{eq:finite-small} give $r\precsim V^{\oplus m}$.
The $m$ row groups are identical copies of the scalar base functions,
so $V^{\oplus m}$ is Cuntz equivalent to $R$. More explicitly, the
row identifications, multiplied by $f_s^{1/2}$, form a rectangular
matrix over $A$ whose source square is $V^{\oplus m}$ and whose range
square is $R$. Thus the stabilized comparison has a single-element
target $R\in A$. No bare multiplier is asserted to belong to $A$. Lemma~\ref{lem:exact} therefore provides $v\in A$ with
\begin{equation}\label{eq:finite-v}
 v^*v=r,\qquad vv^*\in\Her(R).
\end{equation}
Put $d_i=vc_iv^*\in\Her(R)$.

Use the nested cutoffs of Lemma~\ref{lem:towers} and put
\[
 t_s=\eta g_s+(1-\eta)\xi_s.
\]
This function is strictly positive on $U_s$, belongs to $C_0(U_s)$,
and equals $1$ on $\supp f_s$. Fix a tower $s_*$ with nonempty core,
and require that the largest high label occur in this tower. Choose
globally distinct high labels
$\lambda_{s,a}\in[2,2.1]$, $a\in I_s$, with all nontrivial ordered
ratios distinct, as in Lemma~\ref{lem:labels}. Give each boundary row
a distinct positive label $\nu_{s,b}<\eta$, and set
\begin{equation}\label{eq:finite-H}
 H_0=\sum_s t_s\left(\sum_{a\in I_s}\lambda_{s,a}e^s_{aa}
                 +\sum_{b\in E_s\setminus I_s}\nu_{s,b}e^s_{bb}\right).
\end{equation}
On the support of $r$, every high row has $g_s<1$, hence $\xi_s=0$,
and $t_s=\eta g_s$. Thus
\begin{equation}\label{eq:finite-spectral}
 (H_0-2.1\eta)_+r^{1/2}=0,\qquad
 p_r\leq1_{[0,2.1\eta]}(H_0).
\end{equation}
On a reserved row supporting $vv^*$, however, $t_s=1$ and $H_0$ has
the fixed corresponding high label.
\smallskip\noindent\emph{(2) Assignment of the matrix coefficients.}
We now assign the entries of $K_0$. An unordered high--high position
means a pair of distinct high rows; both orientations of that position
are occupied together. First choose, in each block, a chain through all
its high rows, and put $t_s e^s_{a,a'}$ on its successive edges. For
each actual scalar coefficient $\beta$ of each $b_i$, assign a new
high--high position in the same block and put
$t_s\beta e^s_{p,q}$ there. Assign one further position to
$t_sg_s e^s_{p,q}$.

For the entries of $d_i$, write $Q_{s,a}=\xi_s e^s_{aa}$ on each
reserved row. Since $\xi_s=1$ on $\supp f_s$, the actual corners
\[
 d_i^{s,a;t,b}=Q_{s,a}d_iQ_{t,b}
\]
sum to $d_i$ over all ordered reserved row pairs. For an unordered
pair of different reserved rows, encode one corner, obtaining its
adjoint automatically. Encode each diagonal corner separately in an
off-diagonal position. Choose a new position $(p,q)$ between the same
two tower blocks (within that block for a diagonal corner), and put
\begin{equation}\label{eq:finite-corner-code}
 T_i^{s,a;t,b}
       =(t_s e^s_{p,a})\,d_i^{s,a;t,b}\,(t_t e^t_{b,q})
\end{equation}
there. These are actual elements of $A$. Their endpoint functions
equal $1$ on the support of the original corner, so
\[
 H_0 T_i^{s,a;t,b}=\lambda_{s,p}T_i^{s,a;t,b},\qquad
 T_i^{s,a;t,b}H_0=\lambda_{t,q}T_i^{s,a;t,b}.
\]
Only the two endpoint labels specify a position between different
blocks; no multiplier matrix unit between those blocks is used.

Here is an explicit check that these assignments are possible. Within
block $s$ they require at most
\begin{equation}\label{eq:finite-capacity}
 (d_s-1)+q\ell n_s+1+\frac{qk_s(k_s+1)}2+3
 \leq(q\ell+1)n_s+\frac{q\rho^2n_s^2+q\rho n_s}{2}+3
\end{equation}
unordered positions, including the three positions we leave vacant.
The available number is $d_s(d_s-1)/2$, with
$d_s>(1-\delta)n_s$. Since $q\rho^2\leq10^{-4}$,
$q\rho\leq q/100$, and $n_s\geq100(q\ell+q+10)$,
the right side of~\eqref{eq:finite-capacity} is less than
$0.011n_s^2$, whereas $d_s(d_s-1)/2>0.48n_s^2$.
Between different blocks the number required is
$qk_sk_t\leq10^{-4}n_sn_t$, whereas there are
$d_sd_t>0.98n_sn_t$ available positions. Thus every assignment can be
made injectively. Fix three vacant unordered positions in block $s_*$.

For every boundary row $b\in E_s\setminus I_s$, choose a distinct
high target row $j(b)\in I_s$ outside the reserved rows, and put
$t_se^s_{j(b),b}$ in the packet. This is possible because
$|E_s\setminus I_s|<\delta n_s$ and
$d_s-k_s>(1-\delta-\rho)n_s$. Let $S$ be the sum of all the chain
entries, scalar entries, corner entries, and boundary entries just
described, using one orientation for each assigned unordered
position. Define
\[
 K_0=S+S^*+v+v^*\in A_\sa.
\]
\smallskip\noindent\emph{(3) Spectral separation and recovery.}
We verify recovery from $D_0=C^*(1,H_0,K_0)$. For a high label pair
$i,j$, write
$R_{ij}=\{(\lambda_i\tau,\lambda_j\tau):0\leq\tau\leq1\}$.
Here and in Table~\ref{tab:finite-types}, a joint spectral type means
that the formulas of Lemma~\ref{lem:filter} annihilate the indicated
entry whenever the filter vanishes on the displayed set.

\begin{table}[htbp]
\centering\small\renewcommand{\arraystretch}{1.12}
\caption{Spectral types in the finite construction. Each adjoint has
the coordinates reversed.}\label{tab:finite-types}
\begin{tabularx}{\textwidth}{@{}P{.24\textwidth}P{.40\textwidth}Y@{}}
\toprule
Entry & Joint type for $H_0$ & Recovery \\
\midrule
Chain and scalar data
& $R_{ij}$, with $i\ne j$
& Smooth ray filter; origin cutoff \\
Moved remainder corner
& $\{(\lambda_i,\lambda_j)\}$, $i\ne j$
& Smooth point filter \\
Boundary link
& $\{(\lambda_j\tau,\nu_b\tau):0\leq\tau\leq1\}$
& High row cutoff after subtraction \\
Reserved row of $v$
& $\{\lambda_i\}\times[0,2.1\eta]$
& Reserved row and high spectral cutoff \\
Hereditary insertion
& $\{(\lambda_{p_j},\lambda_{q_j})\}$
& Smooth point filter in a vacant position \\
\bottomrule
\end{tabularx}
\end{table}

The first two types have coordinate ratios in $[2/2.1,2.1/2]$.
Distinct assigned ordered positions have distinct ratios, including
the reverse orientations. Boundary rays have ratios greater than
$2/\eta$, and their adjoints have ratios less than $\eta/2$.
For a component of $v$, its range is on a reserved plateau row and
its source is in $1_{[0,2.1\eta]}(H_0)$, by
\eqref{eq:finite-spectral}. Thus the mixed rectangles in the table
are disjoint from every high--high ray and point. Indeed, a
high--high ray with first coordinate at least $2$ has second
coordinate at least $4/2.1>2.1\eta$.

Fix a scalar or chain entry and $\varepsilon>0$. Its ray segment
away from the origin is separated by a positive distance from
every other type. Choose the smooth joint filter equal to
$\theta_\varepsilon(\tau)$ on that ray and zero on every unwanted
type. The resulting element is the selected entry times
$\theta_\varepsilon(t_s)$. Since its coefficient is $O(t_s)$,
Lemma~\ref{lem:division} gives convergence to the entry in norm.
For a moved corner, its single point has positive distance from all
other types; a filter equal to $1$ at that point recovers it exactly.
Consequently all high--high entries, and their finite self-adjoint
sum $K_{\mathrm{hh}}$, belong to $D_0$. Boundary links and comparison
links may overlap each other; they are recovered subsequently,
in the order below.

The chain entries give every $t_se^s_{ab}$ for $a,b\in I_s$, by
Lemma~\ref{lem:soft}. The coded entry $t_sg_se^s_{pq}$ gives
$g_se^s_{aa}$ for every high row $a$: multiply on the two sides by
the known soft matrix units to obtain $t_s^3g_se^s_{aa}$, and use
continuous inverse cutoffs of $t_se^s_{aa}$. The error tends to zero
because $g_s\leq t_s/\eta$. Thus
\[
 Q_{s,a}=\frac{t_se^s_{aa}-\eta g_se^s_{aa}}{1-\eta}\in D_0
 \quad(a\in I_s).
\]
This argument uses only high rows and precedes recovery of $v$.

Choose a scalar continuous function $\chi$ equal to zero on
$[0,2.1\eta]$ and to one on $[2,2.1]$. On a reserved row the
cutoff $Q_{s,a}$ equals the row projection on the range support of
$v$. Its product with every boundary entry vanishes because boundary
targets lie outside the reserved rows. Also
$\chi(H_0)v^*=0$, by~\eqref{eq:finite-spectral}. Consequently
\begin{equation}\label{eq:finite-recover-v}
 Q_{s,a}\chi(H_0)(K_0-K_{\mathrm{hh}})
       =p_{s,a}v\in D_0,
 \qquad
 v=\sum_{s,a\ \mathrm{reserved}}p_{s,a}v\in D_0.
\end{equation}
The projection notation on the right merely identifies the recovered
element; the expression on the left consists of actual known elements.

Subtract $v+v^*$ as well. The remaining packet consists just of the
boundary links and their adjoints. Multiplication on the left by
$t_se^s_{j(b),j(b)}$ isolates
$t_s^2e^s_{j(b),b}$, because the high targets are distinct. Inverse
cutoffs of that known diagonal recover $t_se^s_{j(b),b}$. Its modulus
gives $t_se^s_{bb}$. The high chain and these boundary links form a
connected graph through every row of the block, so products and
inverse cutoffs give
\begin{equation}\label{eq:finite-all-soft}
 t_se^s_{ab}\in D_0\quad(a,b\in E_s).
\end{equation}

We can now decode every original coefficient of $b_i$. If
$t_s\beta e^s_{pq}$ was its coded entry and its original position was
$(a,b)$, multiplication by the two known soft matrix units gives
$t_s^3\beta e^s_{ab}$. Inverse cutoffs on the left diagonal recover
$\beta e^s_{ab}$ in norm, since $\beta=O(g_s)=O(t_s)$. Summing gives
$b_i\in D_0$. Similarly, multiplying
\eqref{eq:finite-corner-code} by $t_se^s_{a,p}$ and
$t_te^t_{q,b}$ recovers $d_i^{s,a;t,b}$ exactly: the endpoint
functions equal $1$ on its support. Include the adjoints for the
opposite corners and sum to obtain $d_i\in D_0$.
Finally $r=v^*v\in D_0$ and
\[
 v^*d_iv=rc_ir\in D_0.
\]
To complete the norm recovery, use the actual functions
$k_\varepsilon(\tau)=\theta_\varepsilon(\tau)/\tau$, extended by zero
near zero. Then
\begin{equation}\label{eq:finite-remainder-limit}
 k_\varepsilon(r)(v^*d_iv)k_\varepsilon(r)
   =\theta_\varepsilon(r)c_i\theta_\varepsilon(r)
      \longrightarrow c_i
 \quad\text{in norm}.
\end{equation}
This follows from $c_i=r^{1/2}a_ir^{1/2}\in\Her(r)$ and the
approximate-unit property of $\theta_\varepsilon(r)$.
Thus $a_i=b_i+c_i\in D_0$. The measure estimate
\eqref{eq:finite-small} was used only to obtain $v$; the recovery
in \eqref{eq:finite-remainder-limit} is a norm argument.
\smallskip\noindent\emph{(4) Three additional hereditary elements.}
It remains to verify the three additional positions. Let $w$ be the
row in block $s_*$ with the largest high label. Choose a nonempty
open set $W$ whose closure lies in the $w$-level of
$\intt\{\xi_{s_*}=1\}$. In particular, $t_{s_*}=1$ on $\overline W$.
Give each of the three vacant positions an orientation $(p_j,q_j)$,
$j=1,2,3$, and define the known actual elements
\[
 A_j=t_{s_*}e^{s_*}_{p_j,w},\qquad
 B_j=t_{s_*}e^{s_*}_{w,q_j}.
\]
For any prescribed $P_j\in\Her(W)_\sa$, put
\[
 E_j=A_jP_jB_j,
 \qquad K=K_0+\sum_{j=1}^3(E_j+E_j^*).
\]
The support plateau gives
\[
 H_0E_j=\lambda_{s_*,p_j}E_j,
 \qquad E_jH_0=\lambda_{s_*,q_j}E_j.
\]
The point $(\lambda_{s_*,p_j},\lambda_{s_*,q_j})$ lies on no old
occupied high--high ray or point, since its unordered position was
vacant and all ordered ratios are distinct. It also lies off every
boundary ray and every mixed spectral set of $v+v^*$. It differs
from the three reverse points and the other new points. A smooth
joint filter, equal to one near this point and zero on all the other
spectral types, recovers $E_j$ exactly from $H_0,K$.
Subtracting all six entries recovers $K_0$, hence $D_0$, and
\[
 A_j^*E_jB_j^*
     =(t_{s_*}^2e^{s_*}_{ww})P_j
                           (t_{s_*}^2e^{s_*}_{ww})=P_j.
\]
This proves the proposition.
\end{proof}

\section{Local recovery of diagonal functions}\label{sec:local}

The group unitaries are assumed to be available in this section.  This
assumption is used twice: in a finite frame, and when recovered base
functions are moved back to their original tower levels.  The argument
does not require a hereditary subalgebra to inherit URP.

\begin{lemma}\label{lem:source-support}
Let $F\subseteq\Gamma$ be finite and symmetric. In a finite family
of towers $(U_s,E_s)$ choose centers $c_{s,l}$ with
$c_{s,l}F^2\subseteq E_s$, and orthogonal multiplier projections
$q_{s,l}$ whose matrix rows are supported in $c_{s,l}F$.
Let $g_s,\xi_s$ be the nested tower functions, put
\[
 h=\bigoplus_sg_s1_{|E_s|},\quad r=1-h,\quad
 t_s=\eta g_s+(1-\eta)\xi_s,\quad
 H=\sum_s t_s\sum_l\lambda_{s,l}q_{s,l},
\]
where $0<\eta<1$ and $0<\lambda_{s,l}\leq b$.
If $e_\gamma\in C(X)_+$ vanishes wherever
$\alpha_\gamma^{-1}(r)$ vanishes, then, for $\gamma\in F$,
\begin{equation}\label{eq:buffer-support}
 (H-b\eta)_+e_\gamma^{1/2}=0,\qquad
 p_{e_\gamma}\leq1_{[0,b\eta]}(H).
\end{equation}
\end{lemma}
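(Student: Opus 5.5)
The plan is to reduce everything to a pointwise statement about the diagonal function $H$, since $H$ lies in the commutative algebra generated by $C(X)$-valued diagonal matrix coefficients over the towers. Concretely, $H$ is a diagonal (Cartan) element: on the level $U_s a$ it acts as multiplication by $t_s\lambda_{s,l}$ transported to that level when $a\in c_{s,l}F$, and as $0$ elsewhere (including off all towers). Likewise $r=1-h$ is the function equal to $1-g_s$ transported to every level of tower $s$ and $1$ off the towers. So $H$, $r$ and $e_\gamma$ all lie in $C(X)$ (viewed via the levels), and both assertions of \eqref{eq:buffer-support} reduce to the pointwise claim that $H(x)\leq b\eta$ at every point $x$ with $e_\gamma(x)>0$. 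Indeed, once this holds, $(H-b\eta)_+$ vanishes on $\osupp(e_\gamma)$, so the product with $e_\gamma^{1/2}$ is zero. Then $p_{e_\gamma}$, the open-support projection of $\osupp(e_\gamma)$, lies under the spectral projection $1_{[0,b\eta]}(H)$, because both are characteristic projections of Borel sets in the commutative algebra $C(X)^{**}$ and $\osupp(e_\gamma)\subseteq\{H\leq b\eta\}$.

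To prove the pointwise claim, I fix $x$ with $e_\gamma(x)>0$. By hypothesis $\alpha_\gamma^{-1}(r)(x)\neq0$, i.e.\ $r(x\gamma^{-1})>0$, with the paper's convention $\alpha_\gamma(f)=f\circ\gamma$ (I will check whether it should read $x\gamma$; by symmetry of $F$ the argument is the same). If $H(x)=0$ there is nothing to prove. Otherwise $x\in U_s a$ for some $a\in c_{s,l}F$, so $x=ya$ with $y\in U_s$. Then $x\gamma^{-1}=y(a\gamma^{-1})$, and $a\gamma^{-1}\in c_{s,l}F^2\subseteq E_s$ because $F$ is symmetric. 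Hence $x\gamma^{-1}$ lies on the level $U_s(a\gamma^{-1})$ of the same tower and has the same base point $y$. There $r=1-g_s(y)$, so $r(x\gamma^{-1})>0$ forces $g_s(y)<1$. By Lemma~\ref{lem:towers}, $\supp\xi_s\subseteq\{g_s=1\}$, so $\xi_s(y)=0$, giving $t_s(y)=\eta g_s(y)\leq\eta$. Therefore $H(x)=t_s(y)\lambda_{s,l}\leq b\eta$.

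The main care point is the bookkeeping of base points and translations. Namely, $r$ and $t_s$ are central scalar multipliers, i.e.\ the same base function on every level, so the value at $x\gamma^{-1}$ is governed by the same $g_s(y)$ as at $x$. The buffer condition $c_{s,l}F^2\subseteq E_s$ is exactly what guarantees that the translate stays inside the tower rather than landing on a point where $r=1$ for unrelated reasons. I will also note that the orthogonality of the $q_{s,l}$ ensures that at most one label contributes at $x$, and that $(H-b\eta)_+e_\gamma^{1/2}$ is an actual element of $A$, since $e_\gamma\in C(X)$ and $H$ is a sum of soft diagonal entries, each of which lies in $A$ by Lemma~\ref{lem:membership}.
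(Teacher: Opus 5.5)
Your buffer computation is the right one: it is the contrapositive of the paper's case $g_s(y)=1$. The reduction you feed it into, however, rests on a false premise, namely that $H$ is a diagonal function in $C(X)$. The lemma allows arbitrary orthogonal multiplier projections $q_{s,l}$ supported in the rows $c_{s,l}F$. In its only application, Proposition~\ref{prop:local}, one has $q_{s,l}=v_{s,l}v_{s,l}^*$. Its fibre at $y$ is the rank-one projection
$\sum_{\delta,\delta'\in F}\chi(yc_{s,l}\delta)^{1/2}\chi(yc_{s,l}\delta')^{1/2}S(yc_{s,l})^{-1}\,e^s_{c_{s,l}\delta,\,c_{s,l}\delta'}$,
which has nonzero off-diagonal entries. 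So $H$ is a genuinely matrix-valued element of the tower algebras and does not commute with $C(X)$. Consequently $H(x)$ and the set $\{H\le b\eta\}$ are meaningless. Your derivation of $p_{e_\gamma}\le 1_{[0,b\eta]}(H)$ from an inclusion of Borel sets in $C(X)^{**}$ also fails. As written, you have proved only the special case of diagonal $q_{s,l}$, and that is not the case the paper needs.

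The repair is to run the dichotomy over whole fibres $y\in U_s$ rather than over points $x$ with $e_\gamma(x)>0$.
\begin{itemize}
\item If $g_s(y)<1$, then $\xi_s(y)=0$ and $t_s(y)\le\eta$. Orthogonality and $0<\lambda_{s,l}\le b$ give $0\le H(y)\le b\eta\,1$, so $(H-b\eta)_+$ vanishes on the entire fibre.
\item If $g_s(y)=1$, your translation argument ($a\gamma^{-1}\in c_{s,l}F^2\subseteq E_s$) shows $e_\gamma(ya)=0$ for every row $a\in\bigcup_l c_{s,l}F$. Since $q_{s,l}(y)$ lives in those rows and $e_\gamma(y)$ is diagonal, $q_{s,l}(y)e_\gamma(y)^{1/2}=0$ for all $l$. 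Hence $H(y)e_\gamma(y)^{1/2}=0$. Because $(\tau-b\eta)_+=\tau\psi(\tau)$ with $\psi$ continuous, this gives $(H(y)-b\eta)_+e_\gamma(y)^{1/2}=0$.
\item Off the towers $H=0$, so the product is zero there as well.
\end{itemize}
For the second assertion, derive it from the first without any commutativity. The range of $e_\gamma^{1/2}$ in $A^{**}$ lies in the kernel of $(H-b\eta)_+$, and the projection onto that kernel is $1_{[0,b\eta]}(H)$ because $H\ge0$. This is exactly how the paper argues.
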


\begin{proof}
Compute in the original tower matrix coordinates. At $y\in U_s$
with $g_s(y)<1$, the nesting of the plateaux implies $\xi_s(y)=0$.
Hence $t_s(y)\leq\eta$ and $0\leq H(y)\leq b\eta1$; the positive
part in \eqref{eq:buffer-support} is zero on the whole fibre.
If $g_s(y)=1$, a selected row is $yc_{s,l}\delta$ with
$\delta\in F$. For $\gamma\in F$,
\[
 c_{s,l}\delta\gamma^{-1}\in c_{s,l}F^2\subseteq E_s,\qquad
 r(yc_{s,l}\delta\gamma^{-1})=0.
\]
Thus $e_\gamma$ vanishes on every row supporting any $q_{s,l}$.
In this fibre, $q_{s,l}e_\gamma^{1/2}=0$, and consequently
$(H-b\eta)_+e_\gamma^{1/2}=0$. Outside the tower levels, $H=0$.
The products just computed are actual elements of the tower
algebras: multiplication of their bounded matrix coefficients by
a function vanishing with $t_s$ is covered by
Lemma~\ref{lem:membership}. Faithfulness of the tower embeddings
therefore proves the first identity in $A$.

In $A^{**}$, the closure of the range of $e_\gamma^{1/2}$ lies in
the kernel of $(H-b\eta)_+$. Since $H\geq0$, the projection onto
this kernel is $1_{[0,b\eta]}(H)$. This proves the second assertion.
The argument establishes annihilation, not only a bound on a
compression of $H$.
\end{proof}

\begin{proposition}\label{prop:local}
Let $0\ne\chi\in C(X)_+$ and let $f\in C(X)_{\sa}$.
There are $H\in\Her(\chi)_+$ and $K\in\Her(\chi)_{\sa}$ such that
\[
 f\in C^*(1,\chi,\{u_\gamma:\gamma\in\Gamma\},H,K).
\]
\end{proposition}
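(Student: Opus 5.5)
The plan is to reduce, via the finite-frame identity of Lemma~\ref{lem:frame}, to recovering finitely many elements of the form $\chi^{1/2}u_\gamma^* f u_\nu\chi^{1/2}$. Since $\chi$ and all $u_\gamma$ are already among the generators, it suffices to show that the algebra $D=C^*(1,\chi,\{u_\gamma\},H,K)$ contains the corners
\[
 \chi^{1/2}\,\alpha\text{-translates of }f\,\chi^{1/2}.
\]
Indeed, once these corners are available, Lemma~\ref{lem:frame} (with $V_\gamma=\chi^{1/2}u_\gamma^*$, $F$ a finite symmetric set chosen by minimality) gives
\[
 f=S^{-1}\Bigl(\sum V_\gamma^*(V_\gamma fV_\nu^*)V_\nu\Bigr)S^{-1},
\]
with $S$ and $S^{-1}$ in $D$. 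Each inner corner $V_\gamma fV_\nu^*=\chi^{1/2}u_\gamma^*fu_\nu\chi^{1/2}$ equals $\chi^{1/2}(f\circ\cdot)\,u_{\gamma^{-1}\nu}\chi^{1/2}$. So it suffices to recover the diagonal elements $\chi^{1/2}\alpha_{\gamma^{-1}}(f)\chi^{1/2}$, which lie in $\Her(\chi)$, and then multiply by the known $u$'s and $\chi^{1/2}$. That is a finite family $P_1,\dots,P_M\in\Her(\chi)_\sa$ of functions supported in $\{\chi>0\}$.

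To encode this finite family inside $\Her(\chi)$, I would first shrink to a small open set $W_0\subseteq\{\chi>0\}$. Using minimality, there is a finite set $G$ such that the translates $W_0\gamma$, $\gamma\in G$, cover $\supp\chi$. A partition of unity subordinate to this cover splits each $P_i$ into finitely many pieces, each supported in some $W_0\gamma$. Each piece, conjugated by the known $u_\gamma$, becomes a function in $C_0(W_0)$. The translation back uses the unitaries, which is the second use mentioned at the start of the section. Hence it is enough to recover finitely many functions $p_1,\dots,p_M\in C_0(W_0)_\sa$.

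Inside $W_0$ I would now build a small tower-like matrix structure. Choose a point $x$, an open $V\ni x$, and distinct group elements $\gamma_1,\dots,\gamma_n$ with $V\gamma_k\subseteq W_0$ pairwise disjoint; freeness makes this possible for $V$ small. Two requirements follow. First, $n$ must exceed the number of positions needed. Second, $W_0$ itself should be covered, up to the cutoff technique, by one level $V\gamma_1$. That second requirement is better arranged by taking $W_0$ to be a small ball and then $V=W_0$ with translates chosen inside $\{\chi>0\}$. These levels carry a matrix algebra $M_n(C_0(V))$ whose elements lie in $\Her(\chi)$ by Lemma~\ref{lem:membership}.

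Set $H=t\diag(\lambda_k)$ with labels from Lemma~\ref{lem:labels}, and $t$ a strictly positive contraction on $V$. Set $K$ to be a chain of soft matrix units $te_{k,k+1}$ plus the coded entries $t\,p_ie_{a_i b_i}$ at distinct off-diagonal positions, together with their adjoints. Then Lemma~\ref{lem:labels}(2) separates every entry inside $C^*(1,H,K)$. Lemma~\ref{lem:soft} gives all soft units, and moving each coded entry to a diagonal position followed by inverse cutoffs (Lemma~\ref{lem:division}) yields $p_ie_{11}=p_i$ in the first level. Lemma~\ref{lem:source-support}-type spectral control is not needed here, because everything lives in one block with no remainder. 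The remainder problem of Proposition~\ref{prop:finite} disappears, since the $p_i$ are chosen supported strictly inside the single level.

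The main obstacle is the bookkeeping that keeps every piece strictly inside one level, so that it is recovered exactly rather than only outside a boundary region. The partition of unity must be subordinate to open sets whose closures lie in translates of $V$, and the coefficients must be $O(t)$ so that the inverse cutoffs of Lemma~\ref{lem:division} converge in norm. I would handle this by taking $t=1$ on a compact set containing all supports, exactly as the plateau $\xi_s$ is used in the proof of Proposition~\ref{prop:finite}.
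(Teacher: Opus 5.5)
There is a genuine gap. Your reduction to finitely many real functions supported in $\{\chi>0\}$ is fine. The inner corner is $\chi^{1/2}\alpha_{\gamma^{-1}}(f)u_{\gamma^{-1}\nu}\chi^{1/2}$, so you need a division cutoff rather than plain multiplication; alternatively, use $\alpha_\gamma(\chi)f=u_\gamma(\chi\,\alpha_{\gamma^{-1}}(f))u_\gamma^*$ directly. The single-block spectral encoding would also work \emph{if} the block were large enough. The problem is that it is never shown to be large enough, and the construction is circular.

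Your first version, with the $V\gamma_k\subseteq W_0$ pairwise disjoint and $W_0$ covered by the single level $V\gamma_1$, is impossible for $n\geq2$. After the correction $V=W_0$, two counts compete:
\begin{itemize}
\item The number of coded entries is a fixed multiple of $|G(W_0)|$, the number of translates of $W_0$ needed to cover $\supp\chi$.
\item The available room is about $n(W_0)^2/2$, where $n(W_0)$ is the number of pairwise disjoint translates of $W_0$ fitting inside $\{\chi>0\}$.
\end{itemize}
Shrinking $W_0$ increases both, and nothing in your argument relates them. Invariant measures only give $n(W_0)\mu(W_0)\leq\mu(\{\chi>0\})$ and $|G(W_0)|\mu(W_0)\geq\mu(\supp\chi)$. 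Quadratic room would suffice if $|G(W_0)|=O(1/\mu(W_0))$ and $n(W_0)$ were of order $1/\mu(W_0)$. Freeness and minimality give neither bound: the covering number is governed by the longest return time to $W_0$, which $\mu(W_0)$ does not control, and no matching lower bound on the packing is available. So your sentence ``the remainder problem disappears'' is exactly where the proof fails. The remainder has been displaced into an unproved covering-versus-packing estimate. It is telling that your argument never uses the URP or COS.

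The paper avoids covering $X$ by translates of a single set. It takes URP towers, each with its own base, and splits $f=hf+\sum_\gamma\alpha_\gamma(c^{(0)}_\gamma)$.
\begin{itemize}
\item \emph{Tower part.} The term $hf$ needs only the $N_s$ base functions $g_sf(\cdot a)$, one per level, which are moved by unitaries within their own tower. They are stored in a matrix algebra of size $R_s\geq N_s/C_F$, built inside $\Her(\chi)$ from the frame-normalized columns $v_{s,l}$. Demand is therefore linear and room quadratic in the \emph{same} tower height.
\item \emph{Complement.} The set $\{r>0\}$, which is small only in measure, is not moved by group translates at all. It is moved into reserved columns by comparison links $w_\gamma\in\Her(\chi)$, obtained from COS and stable rank one (Lemma~\ref{lem:exact}). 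Lemma~\ref{lem:source-support} gives these links a high/low spectral type, so they can be separated inside $K$.
\end{itemize}
Your proposal needs some substitute for both ingredients: comparison for the part of $X$ the towers do not cover, and a matrix size tied to the number of pieces.
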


\begin{proof}
Write
\[
 D_0=C^*(1,\chi,\{u_\gamma:\gamma\in\Gamma\}).
\]
Replacing $\chi$ by a scalar multiple, assume $\|\chi\|\leq1$.
Minimality and compactness give a finite symmetric set
$F\subseteq\Gamma$, containing $e$, such that
\begin{equation}\label{eq:local-frame}
 V_\gamma=\chi^{1/2}u_\gamma^*,\qquad
 S=\sum_{\gamma\in F}V_\gamma^*V_\gamma
   =\sum_{\gamma\in F}\alpha_\gamma(\chi)\geq\kappa1
 \quad(\kappa>0).
\end{equation}
In particular, $V_\gamma,S^{-1}\in D_0$.  Put $M=|F|$ and
$C_F=2|FF^{-1}|$.  The functions
\begin{equation}\label{eq:local-partition}
 a_\gamma=\alpha_\gamma(\chi)S^{-1}\quad(\gamma\in F)
\end{equation}
belong to $D_0\cap C(X)$, are positive, and sum to $1$.

We first specify the numerical room needed later.  Fix $0<\eta<10^{-3}$
and $0<\rho<10^{-2}$.  If $(\lambda,m)$ are the COS constants, choose
\begin{equation}\label{eq:local-error-choice}
 0<\varepsilon<\min\left\{\frac14,
              \frac{\lambda\rho}{8C_FMm}\right\}.
\end{equation}
The tower heights will be taken large enough to satisfy all the finite
inequalities below.  This is possible in Lemma~\ref{lem:towers}, since
the constants $F,M,C_F,\rho,m$ have already been fixed.
For example, it suffices to require $N_s\geq C_FR_*$, where
\[
 R_* =\max\left\{\frac{2Mm}{\rho},\,4(M+1),\,
                        16(C_F+M+2)\right\}.
\]

Apply Lemma~\ref{lem:towers} with buffer $F^2$, error
$\varepsilon$, and the prescribed minimum height. Since
$\varepsilon<1/4$, its interior estimate gives
\[
 |\intt_{F^2}(E_s)|\geq\tfrac12|E_s|.
\]
Use its mutually disjoint towers $(U_s,E_s)$, closed cores $B_s$, and
nested functions $g_s,\xi_s,f_s$.  Write $N_s=|E_s|$.  Let
$C_s\cong M_{N_s}(C_0(U_s))$ be the original tower algebra, with
multiplier matrix units $e^s_{ab}$, $a,b\in E_s$.  In this identification
a point $y\in U_s$ represents all the physical levels $ya$,
$a\in E_s$.  Define
\begin{equation}\label{eq:local-global-cutoffs}
 h=\bigoplus_s g_s1_{N_s},\qquad r=1-h,
 \qquad t_s=\eta g_s+(1-\eta)\xi_s,
 \qquad T_s=t_s1_{N_s}\in C_s.
\end{equation}
The direct-sum notation denotes the inclusions into $A$, and $h$ is a
diagonal function on $X$.  The function $g_s$ is strictly positive on
$U_s$, and
\begin{equation}\label{eq:local-t-properties}
 t_s\geq\eta g_s>0\text{ on }U_s,\qquad
 t_s\leq\eta\text{ where }g_s<1.
\end{equation}
Moreover, $t_s=1$ on an open neighborhood of $\supp f_s$, containing
$B_s$.  The open support $E=\{r>0\}$ satisfies
\begin{equation}\label{eq:local-small-remainder}
 \mu(E)<\varepsilon\qquad(\mu\in \MC),
\end{equation}
because $h=1$ on all the closed core levels.

The positive functions and the remainder functions to be transported
are
\begin{equation}\label{eq:local-transported-remainders}
 e_\gamma=\alpha_\gamma^{-1}(a_\gamma r)
   =\chi\,\frac{\alpha_\gamma^{-1}(r)}
                    {\alpha_\gamma^{-1}(S)},\qquad
 c_\gamma^{(0)}=e_\gamma\alpha_\gamma^{-1}(f)
 \quad(\gamma\in F).
\end{equation}
Thus $e_\gamma,c_\gamma^{(0)}\in\Her(\chi)$,
$c_\gamma^{(0)}\in\Her(e_\gamma)$, and
\begin{equation}\label{eq:local-exact-diagonal-decomposition}
 f=hf+\sum_{\gamma\in F}\alpha_\gamma(c_\gamma^{(0)}).
\end{equation}
By invariance, the open support of each $e_\gamma$ has measure less than
$\varepsilon$ for every invariant measure.

\medskip\noindent
\emph{(1) Return columns inside $\Her(\chi)$.}
Choose a maximal family of mutually disjoint sets
$c_{s,l}F\subseteq E_s$ with
$c_{s,l}\in\intt_{F^2}(E_s)$, and denote its cardinality by $R_s$.
If $cF\cap c'F\ne\varnothing$, then $c'\in cFF^{-1}$.
Hence each chosen tile excludes at most $|FF^{-1}|$ centers.
Maximality and the preceding interior estimate give
\begin{equation}\label{eq:local-return-rank}
 R_s\geq\frac{N_s}{2|FF^{-1}|}=\frac{N_s}{C_F}.
\end{equation}
Index these tiles by $0\leq l<R_s$, and take $c_{s,0}$ as the reference
center.  Put
\[
 p_s=e^s_{c_{s,0},c_{s,0}}.
\]
This is an open projection in $A^{**}$; it is not assumed to be in the
generated algebra.  The following are bounded multiplier columns of
the \emph{original} algebra $C_s$:
\begin{equation}\label{eq:local-columns}
 v_{s,l}(y)=\sum_{\delta\in F}
 \frac{\chi(yc_{s,l}\delta)^{1/2}}
      {S(yc_{s,l})^{1/2}}\,e^s_{c_{s,l}\delta,c_{s,0}}
 \qquad(y\in U_s).
\end{equation}
The denominator is at least $\kappa^{1/2}$.  Disjointness of the tiles
and the normalization imply
\begin{equation}\label{eq:local-column-relations}
 v_{s,l}^*v_{s,k}=\delta_{lk}p_s,\qquad
 q_{s,l}=v_{s,l}v_{s,l}^*.
\end{equation}
Every $q_{s,l}$ has its range in the support projection of $\chi$.
Indeed, its component at a level where $\chi$ vanishes is zero.
Consequently
\[
 T_sv_{s,l}v_{s,k}^*\in A\cap\Her(\chi).
\]
The factor $t_s\in C_0(U_s)$ guarantees membership in $A$.
The operator $T_s$ is central in $C_s$ and commutes with all its
multiplier columns and scalar coefficient functions.

Choose, simultaneously for all pairs $(s,l)$, labels
$\lambda_{s,l}\in[2,2.1]$ as in Lemma~\ref{lem:labels}, and set
\begin{equation}\label{eq:local-H}
 H=\sum_s T_s\sum_{l=0}^{R_s-1}\lambda_{s,l}q_{s,l}
       \in\Her(\chi)_+.
\end{equation}
Apply Lemma~\ref{lem:source-support} with $b=2.1$ to the functions
$e_\gamma$ in \eqref{eq:local-transported-remainders}. Their open
supports lie in that of $\alpha_\gamma^{-1}(r)$, and the return
centers satisfy the required $F^2$ buffer. We obtain the exact identity
\begin{equation}\label{eq:local-source-spectral-bound}
 (H-2.1\eta)_+e_\gamma^{1/2}=0,\qquad
 p_{e_\gamma}\leq1_{[0,2.1\eta]}(H)\quad(\gamma\in F).
\end{equation}
This is the support statement needed for the comparison links below.

\medskip\noindent
\emph{(2) Comparison and the reserved columns.}
Let
\[
 k_s=\left\lfloor\frac{\rho R_s}{Mm}\right\rfloor.
\]
By increasing the minimum tower height, assume
$k_s\geq\rho R_s/(2Mm)$.
Reserve, for each $\gamma\in F$, $m$ mutually disjoint sets
$J_{s,\gamma,1},\ldots,J_{s,\gamma,m}$ of $k_s$ column indices.
All these sets, for all $\gamma$, are disjoint.  Their union has at
most $\rho R_s$ elements.  Write
\[
 T_{s,\gamma}=\bigcup_{j=1}^mJ_{s,\gamma,j}.
\]
Avoid the reference column and, in addition, leave $M$ different
nonreserved column indices $l_{s,\gamma}$ for the frame links below.
This is possible for sufficiently large $R_s$.

Choose an open set $\Omega_s$ containing $B_s$, with compact closure in
the open plateau where $t_s=1$, and a positive function
$\beta_s\in C_0(U_s)$ with open support $\Omega_s$.
For a fixed $\gamma$, select one original center level from each of
$k_s$ distinct return tiles, and let $b_\gamma\in C(X)_+$ be the sum
of the copies of $\beta_s$ on these levels, over all $s$.
Its open support $O_\gamma$ is a \emph{Cartan} open set.  By invariance
and \eqref{eq:local-return-rank},
\begin{equation}\label{eq:local-cartan-target-size}
 \mu(O_\gamma)\geq\sum_s k_s\mu(B_s)
 \geq\frac{\rho}{2C_FMm}\sum_s N_s\mu(B_s)
 >\frac{\rho(1-\varepsilon)}{2C_FMm}.
\end{equation}
Equations~\eqref{eq:local-error-choice} and
\eqref{eq:local-small-remainder} give the strict COS inequality.
Hence
\[
 e_\gamma\precsim b_\gamma^{\oplus m}.
\]
For each of these $m$ copies, match its selected original center rows
with the column indices in $J_{s,\gamma,j}$.  If $a$ is a selected
center row and $l$ its matched column, the multiplier
$v_{s,l}e^s_{c_{s,0},a}$ carries that row to $q_{s,l}$.
Multiplication by $\beta_s^{1/2}$ makes this an element of $A$.
The relations in \eqref{eq:local-column-relations} show that these
matches give Cuntz equivalence of the matrix direct sum with
\begin{equation}\label{eq:local-folded-target}
 B_\gamma=\sum_s\sum_{l\in T_{s,\gamma}}\beta_s q_{s,l}
       \in\Her(\chi)_+.
\end{equation}
Thus $e_\gamma\precsim B_\gamma$ in $A$.
Lemma~\ref{lem:exact} gives $w_\gamma\in\Her(\chi)$ with
\begin{equation}\label{eq:local-comparison-links}
 w_\gamma^*w_\gamma=e_\gamma,\qquad
 w_\gamma w_\gamma^*\in\Her(B_\gamma).
\end{equation}
If $e_\gamma=0$, take $w_\gamma=0$ and omit its unused data.
The range of $w_\gamma$ is supported on its own reserved indices
$T_{s,\gamma}$ and on $t_s=1$.  On each of those rows the left
$H$-value is the constant $\lambda_{s,l}\in[2,2.1]$, whereas the
right $H$-spectrum is contained in $[0,2.1\eta]$ by
\eqref{eq:local-source-spectral-bound}.

\medskip\noindent
\emph{(3) The coefficients to be recovered.}
In every column algebra include the adjacent chain
$T_sv_{s,l}v_{s,l+1}^*$, $0\leq l<R_s-1$.
For every original level $a\in E_s$, the scalar base function
\begin{equation}\label{eq:local-base-data}
 F_{s,a}(y)=g_s(y)f(ya)
\end{equation}
satisfies $|F_{s,a}|\leq\eta^{-1}\|f\|t_s$.
Assign an unused off-diagonal position $(i,j)$ to the actual entry
\[
 F_{s,a}v_{s,i}v_{s,j}^*
   =T_s(F_{s,a}/t_s)v_{s,i}v_{s,j}^*.
\]
The quotient is continuous and bounded on $U_s$, and the entry belongs
to $A$ since $F_{s,a}\in C_0(U_s)$. Also assign positions to the $M$
entries $T_sc'_\gamma v_{s,i}v_{s,j}^*$, using the scalar functions
defined below in \eqref{eq:local-frame-defect}.

The other high/high data are the corners of
\begin{equation}\label{eq:local-compressed-data}
 A_\gamma=w_\gamma c_\gamma^{(0)}w_\gamma^*.
\end{equation}
For each pair of its reserved range rows $(s,p),(t,q)$, move the
corner to an unused off-diagonal pair of coding rows.  More explicitly,
if
\[
 c=v_{s,p}^*A_\gamma v_{t,q},
\]
place $v_{s,i}c v_{t,j}^*$ in the chosen position $((s,i),(t,j))$.
The coefficient $c$ is supported on $t_s=t_t=1$. In fact
\[
 c=(T_sv_{s,p})^*A_\gamma(T_tv_{t,q})\in A.
\]
Its translated corner is also in $A$, because
\[
 v_{s,i}cv_{t,j}^*=(T_sv_{s,i})c(T_tv_{t,j})^*.
\]
All four multiplied columns are actual elements of their tower
algebras.  For $s=t$ use a distinct unused unordered pair $\{i,j\}$,
with one orientation; even a diagonal corner is moved to such an
off-diagonal pair. Encode only original pairs $p\leq q$ when $s=t$:
restore an off-diagonal source corner together with its adjoint, and
restore a diagonal source corner once.
For $s<t$ use distinct pairs between the two
blocks and let adjoints encode the reverse corners.  The operators
$A_\gamma$ are self-adjoint, since $f$ is real-valued.

Here is explicit room for these assignments.  Put
$d_{s,\gamma}=|T_{s,\gamma}|=mk_s\leq\rho R_s/M$.
Within one block, it suffices to assign at most
\[
 (R_s-1)+N_s+M+\sum_{\gamma\in F}d_{s,\gamma}^2
 \leq (R_s-1)+C_FR_s+M+\frac{\rho^2}{M}R_s^2
\]
unordered pairs, each followed by its adjoint in $K$.
This is less than $R_s(R_s-1)/2$ with the stated choice of $R_*$.
Indeed, $R_s\geq16(C_F+M+2)\geq80$ bounds the linear terms by
$R_s^2/16$, while $\rho^2/M<10^{-4}$. The required number is
therefore less than $0.063R_s^2$, and the available number is
greater than $0.49R_s^2$.
Between two blocks the number of positions is at most
\[
 \sum_{\gamma\in F}d_{s,\gamma}d_{t,\gamma}
 \leq\frac{\rho^2}{M}R_sR_t<R_sR_t.
\]
These inequalities explain why one may require all heights to be
sufficiently large before choosing the towers.  They also avoid
counting a diagonal corner as a separate self-adjoint generator.

\medskip\noindent
\emph{(4) The frame links and the second self-adjoint element.}
Fix $s$ temporarily and omit its subscript.  In the reference tile set
\begin{equation}\label{eq:local-frame-defect}
 w'_{\gamma}=V_\gamma p,\qquad
 c'_{\gamma}(y)=\frac{\chi(yc_0\gamma)}{S(yc_0)^{1/2}},\qquad
 v_0^*w'_{\gamma}=c'_\gamma p,\qquad
 z_\gamma=w'_{\gamma}-v_0c'_{\gamma}.
\end{equation}
Here $c'_\gamma$ is the bounded scalar base function for which a position
was reserved above. We regard it as a central multiplier of the original
tower block; $c'_\gamma p$ is its corner on the reference source row.
The tower relations give
\[
 v_0^*z_\gamma=0,
 \qquad q_lz_\gamma=0\quad\text{for every }l,
 \qquad Hz_\gamma=0.
\]
For $l\ne0$, the second identity uses disjointness of the reference
tile and the other tiles.  Introduce the additional links
\begin{equation}\label{eq:local-extra-links}
 x_{s,\gamma}=T_sz_\gamma v_{s,l_{s,\gamma}}^*.
\end{equation}
They belong to $A\cap\Her(\chi)$.  Indeed, they are bounded
multiplier coefficients of the original tower block multiplied by
$t_s\in C_0(U_s)$, and both their endpoints lie in the support of
$\chi$.  They have left $H$-value $0$ and right joint values
$\lambda_{s,l_{s,\gamma}}t_s$.

Let $\mathcal P$ be the finite list of all the high/high entries just
assigned, including the chain, the scalar data, and the moved corners.
Define the self-adjoint element
\begin{equation}\label{eq:local-K}
 K=\sum_{x\in\mathcal P}(x+x^*)
     +\sum_{s,\gamma\in F}(x_{s,\gamma}+x_{s,\gamma}^*)
     +\sum_{\gamma\in F}(w_\gamma+w_\gamma^*)
       \in\Her(\chi).
\end{equation}
For the rest of the proof put $D=C^*(D_0,H,K)$.

\medskip\noindent
\emph{(5) Recovery of the columns and the remainder.}
Table~\ref{tab:local-types} records all terms of $K$.
Here $R_{ij}=\{(\lambda_i\tau,\lambda_j\tau):0\leq\tau\leq1\}$,
with the tower index included in a label index.

\begin{table}[htbp]
\centering\small\renewcommand{\arraystretch}{1.12}
\caption{Spectral types in the local construction. The adjoint of
each entry has the reversed type.}\label{tab:local-types}
\begin{tabularx}{\textwidth}{@{}P{.24\textwidth}P{.40\textwidth}Y@{}}
\toprule
Entry & Joint type for $H$ & Recovery \\
\midrule
Chain and scalar data
& $R_{ij}$, $i\ne j$, coefficient $O(t_s)$
& Smooth ray filter; origin cutoff \\
Moved corner of $A_\gamma$
& $\{(\lambda_i,\lambda_j)\}$, $i\ne j$
& Smooth point filter \\
Component of $w_\gamma$
& $\{\lambda_i\}\times[0,2.1\eta]$
& Reserved column and high cutoff \\
Frame defect $x_{s,\gamma}$
& $\{0\}\times[0,\lambda_{s,l_{s,\gamma}}]$
& Right column cutoff after subtraction \\
\bottomrule
\end{tabularx}
\end{table}

Every scalar high/high entry has joint spectral support on its
assigned ray
\[
 \{(\lambda_{s,i}t,\lambda_{s,j}t):0\leq t\leq1\}.
\]
Its coefficient is $O(t)$, including the scalar functions
$F_{s,a}$ and $t_sc'_\gamma$.  A moved remainder corner in one block
has its support at the endpoint of its assigned ray; one between
blocks has the single joint point
$(\lambda_{s,i},\lambda_{t,j})$.  The generic ratios make all these
assigned types distinct, including their adjoints.  By contrast,
the additional links have joint types on the axes, and the comparison
links have high/low or low/high joint types.  A comparison link has a left value in $[2,2.1]$ and a right
value in $[0,2.1\eta]$, or the reverse values for its adjoint. Such a
type cannot meet a high/high ray: a point on that ray has the ratio of
its coordinates in $[2/2.1,2.1/2]$. The axis types meet the high/high
rays only at the origin. This proves the required separation.

Lemma~\ref{lem:filter}, using smooth functions which avoid a shrinking
neighborhood of the origin, therefore recovers every member of
$\mathcal P$.  The omitted origin part is $O(t)$ and tends to zero in
norm.  Lemma~\ref{lem:soft} then recovers
\begin{equation}\label{eq:local-soft-units-known}
 E^s_{ij}=T_sv_{s,i}v_{s,j}^*,\qquad T_sq_{s,i}\quad(0\leq i,j<R_s).
\end{equation}
For example, a scalar entry $T_s\phi v_{s,i}v_{s,j}^*$ is transported
to the reference diagonal by
\[
 E^s_{0i}\,k_\delta(T_sq_{s,i})
    (T_s\phi v_{s,i}v_{s,j}^*)
       k_\delta(T_sq_{s,j})\,E^s_{j0}
       \longrightarrow T_s\phi q_{s,0},
 \qquad k_\delta(t)=\theta_\delta(t)/t.
\]
The product equals $\theta_\delta(t_s)^2T_s\phi q_{s,0}$; the
convergence follows from its $O(t_s)$ coefficient.
Thus $F_{s,a}q_{s,0}$ and $T_sc'_\gamma q_{s,0}$ are in $D$.
Similarly, the moved remainder corners are transported back to their
reserved rows using \eqref{eq:local-soft-units-known}.  Here no
inverse limit is needed, because these corners have $t_s=t_t=1$.
Summing the restored corners, using each diagonal corner once and
adding adjoints only for the off-diagonal source corners, gives
$A_\gamma\in D$.

Subtract the recovered high/high terms from $K$.
Choose a continuous function $G$ equal to $0$ on $[0,2.1\eta]$ and
equal to $1$ on $[2,2.1]$.
For the reserved range of a fixed $\gamma$, set
\[
 P_{\gamma,\delta}
       =\sum_s\sum_{l\in T_{s,\gamma}}
                      \theta_\delta(T_sq_{s,l})\in D.
\]
For $\delta<1/2$, this acts as the identity on the range of
$w_\gamma$, since that range has $t_s=1$.  Other comparison links
have disjoint reserved ranges.  The adjoints of all comparison links
have low left $H$-spectrum.  The $x_{s,\gamma}$ have left value $0$,
and their adjoints have high ranges on the indices
$l_{s,\gamma}$, all outside the reserved sets.  It follows that
\[
 P_{\gamma,\delta}G(H)
       \left(K-\sum_{x\in\mathcal P}(x+x^*)\right)=w_\gamma.
\]
Hence \emph{each} $w_\gamma\in D$.
The relations in \eqref{eq:local-comparison-links} and
\eqref{eq:local-compressed-data} give
\[
 e_\gamma\in D,
 \qquad w_\gamma^*A_\gamma w_\gamma
             =e_\gamma c_\gamma^{(0)}e_\gamma\in D.
\]
Division by soft cutoffs of $e_\gamma$, as in
Lemma~\ref{lem:division}, recovers
\begin{equation}\label{eq:local-remainder-recovered}
 c_\gamma^{(0)}\in D.
\end{equation}

After subtracting all comparison links, the remaining expression is
$\sum_{s,\gamma}(x_{s,\gamma}+x_{s,\gamma}^*)$.
Right multiplication by
$\theta_\delta(T_sq_{s,l_{s,\gamma}})$ selects
$x_{s,\gamma}\theta_\delta(t_s)$: the other links have different
right column indices, and the adjoints have right supports orthogonal
to every column.  This converges in norm to $x_{s,\gamma}$, because
$x_{s,\gamma}$ has an $O(t_s)$ coefficient.  Therefore all these
links belong to $D$.

\medskip\noindent
\emph{(6) Recovery of the diagonal function.}
Again fix one block and suppress $s$.  From its recovered link and
soft matrix units,
\[
 x_\gamma k_\delta(Tq_{l_\gamma})E_{l_\gamma,0}
       =\theta_\delta(t)Tz_\gamma v_0^*
       \longrightarrow Tz_\gamma v_0^*.
\]
Adding the recovered scalar term $Tc'_\gamma q_0$ gives
\begin{equation}\label{eq:local-Bgamma}
 L_\gamma=T w'_\gamma v_0^*\in D.
\end{equation}
Notice that $p$ has not been recovered. Instead, the recovered
$L_\gamma$ records its image under each known frame operator.

Let $\phi=F_{s,a}$ be one of the scalar functions in
\eqref{eq:local-base-data}. Define the actual diagonal function
$A_\phi=\phi p\in C(X)$ on the reference level $U_sc_{s,0}$, extended
by zero. It belongs to $C(X)$ because $\phi\in C_0(U_s)$.
The corresponding column corner $Y_\phi=\phi q_0$ has already been
recovered. Put
\[
 b_\delta(t)=\theta_\delta(t)/t^2,
\]
extended by zero near zero. Functional calculus on the known element
$Tq_0$, followed by the column relations, gives
\begin{equation}\label{eq:local-frame-corners}
 L_\gamma b_\delta(Tq_0)Y_\phi L_\nu^*
   =\theta_\delta(t)V_\gamma A_\phi V_\nu^*
       \longrightarrow V_\gamma A_\phi V_\nu^*.
\end{equation}
Here $\theta_\delta(t)$ is the scalar base cutoff on the indicated
tower entries. The frame buffer ensures that all their endpoints
remain in the same original block. Since
$|\phi|\leq\eta^{-1}\|f\|t$, the error in
\eqref{eq:local-frame-corners} is at most
$2\delta\eta^{-1}\|f\|\|V_\gamma\|\|V_\nu\|$.
Thus each $V_\gamma A_\phi V_\nu^*$ belongs to $D$.

Apply Lemma~\ref{lem:frame} to the known frame
$(V_\gamma)_{\gamma\in F}$ and the just recovered inner corners.
It gives
\begin{equation}\label{eq:local-frame-reconstruction}
 A_\phi=S^{-1}\left(\sum_{\gamma,\nu\in F}
       V_\gamma^*(V_\gamma A_\phi V_\nu^*)V_\nu\right)S^{-1}\in D.
\end{equation}
The middle sum equals $SA_\phi S$. Every outer operator in this
identity belongs to $D_0$. In particular, no inverse of a compressed
frame and no membership assertion for $p$ is needed.

For the chosen $\phi=F_{s,a}$, the diagonal function on the original
level $U_sa$ is
\[
 u_{a^{-1}c_{s,0}}A_\phi u_{a^{-1}c_{s,0}}^*.
\]
Its value at $ya$ is $A_\phi(yc_{s,0})=g_s(y)f(ya)$, by the
right-action convention. Its support is contained in $U_sa$.
All these unitaries belong to $D_0$, so summing over the levels gives
$hf\in D$.
Finally, \eqref{eq:local-remainder-recovered}, the group unitaries,
and \eqref{eq:local-exact-diagonal-decomposition} give $f\in D$.
\end{proof}

\begin{remark}\label{rem:local-known-unitaries}
The hypothesis that all implementing group unitaries are already
available cannot simply be removed from Proposition~\ref{prop:local}.
The finite frame depends on $\chi$, and the final translates of its
recovered base functions use the full group.  Neither operation is
inferred from weak tracial approximation alone.
\end{remark}

\section{Countable assembly and the proof of the main theorem}

We first place local constructions in orthogonal diagonal hereditary
subalgebras. This separation is spectral, so both elements of each
local pair can subsequently be extracted by the same cutoff.

\begin{lemma}\label{lem:bands}
Every nonempty open subset $W$ of $X$ contains the compact support of a
positive function $z$ for which there are pairwise disjoint open
intervals $I_n\subset(0,\infty)$ meeting its range. There are nonzero
$\chi_n=\psi_n(z)$ and continuous functions $\vartheta_n$ supported in
$I_n$ such that
\[
 \supp\psi_n\Subset I_n,\qquad
 \vartheta_n\psi_n=\psi_n,\qquad
 \vartheta_n\psi_m=0\quad(m\ne n).
\]
If $a_m\in\Her(\chi_m)$, then
$\vartheta_n(z)a_m=a_m\vartheta_n(z)=\delta_{nm}a_m$.
\end{lemma}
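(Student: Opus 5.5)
We will construct a function $z$ whose range contains a sequence of distinct values, build pairwise disjoint open intervals around those values, and obtain $\chi_n$ and $\vartheta_n$ by continuous functional calculus.

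\emph{Choice of $z$ and the intervals.} Since $\Gamma$ is infinite and the action is free and minimal, $X$ has no isolated points: an isolated point would have an open orbit, which by minimality would be all of $X$, forcing $X$ to be infinite, discrete and compact, which is impossible. Pick a point $x_0\in W$ and, by metrizability, a compact neighbourhood $\overline{W'}\subset W$ of $x_0$. Let $z(x)=\max\{0,r-d(x,x_0)\}$, where $r$ is small enough that the closed ball of radius $r$ lies in $W'$. Then $z$ is continuous and positive, with compact support in $W$. Moreover $X$ is connected near $x_0$ in no sense we need; all we need is that the range of $z$ is infinite. This holds because $x_0$ is not isolated, so there are points $x_k\to x_0$ with distinct distances $d(x_k,x_0)<r$, and these give distinct values $z(x_k)\in(0,r)$. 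Pass to a strictly monotone subsequence of values $y_n=z(x_n)$ and choose pairwise disjoint open intervals $I_n\subset(0,\infty)$ with $y_n\in I_n$, for example using half the gaps between consecutive values. Each $I_n$ meets the range of $z$.

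\emph{The functions.} Choose continuous $\psi_n\ge0$ with $\psi_n(y_n)=1$ and $\supp\psi_n\Subset I_n$. Choose continuous $\vartheta_n$ supported in $I_n$ with $\vartheta_n=1$ on $\supp\psi_n$, via Urysohn on the real line. Then $\vartheta_n\psi_n=\psi_n$, and $\vartheta_n\psi_m=0$ for $m\neq n$ because the intervals are disjoint. The element $\chi_n=\psi_n(z)$ is nonzero since $\psi_n(z(x_n))=1$. These scalar identities then hold in $C(X)$ after composition with $z$.

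\emph{The hereditary statement.} Let $a_m\in\Her(\chi_m)$. By the same approximate-unit argument as in Lemma~\ref{lem:exact}, $a_m$ is a norm limit of elements of the form $\chi_m x\chi_m$. Since $\vartheta_n(z)\chi_m=(\vartheta_n\psi_m)(z)=\delta_{nm}\chi_m$, and likewise on the right, we get $\vartheta_n(z)\chi_m x\chi_m=\delta_{nm}\chi_m x\chi_m$ and $\chi_m x\chi_m\vartheta_n(z)=\delta_{nm}\chi_m x\chi_m$. Passing to the limit gives the claimed identity.

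\emph{Main point needing care.} The only nontrivial step is guaranteeing an infinite range, and this is where we use the standing assumption that $\Gamma$ is infinite (Sections 2--5). If $X$ were finite, the range of $z$ would be finite, and the lemma would have to be treated separately, as the paper announces for the finite-group case.
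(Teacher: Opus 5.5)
Your proof is correct and follows essentially the paper's route: the absence of isolated points (from freeness, minimality and $\Gamma$ infinite) gives a function with compact support in $W$ and infinitely many distinct values; disjoint intervals around those values produce $\psi_n$ and $\vartheta_n$; and the hereditary identity passes from $\chi_mA\chi_m$ to its norm closure. The only difference is cosmetic: you use the single tent function $z=\max\{0,r-d(\cdot,x_0)\}$, whose values $r-d(x_k,x_0)$ accumulate at $r$, whereas the paper sums disjoint bumps $\sum_n4^{-n}b_n$ to get the explicit values $4^{-n}$.
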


\begin{proof}
The space has no isolated points: otherwise the orbit of an isolated
point would have invariant closed complement, and minimality would
make $X$ a compact discrete orbit, contradicting freeness of the
infinite group. Choose a smaller open set with closure in $W$, a
sequence of distinct points $x_n$ converging to a point there, and
disjoint small neighborhoods of the $x_n$. Choose bump functions
$b_n$ of compact support in those neighborhoods, with $b_n(x_n)=1$.
Then $z=\sum_n4^{-n}b_n$ converges uniformly, has compact support in
$W$, and takes each value $4^{-n}$. Take disjoint intervals around
these values and choose the indicated functions. The last assertion
holds first on $\chi_mA\chi_m$ and then on its norm closure.
\end{proof}

\begin{proof}[Proof of Theorem~\ref{thm:main}]
If $\Gamma$ is finite, each orbit is closed. Minimality and freeness
therefore imply that $X$ is one orbit with $|X|=|\Gamma|$, and
$A\cong M_{|\Gamma|}(\C)$. A diagonal self-adjoint matrix with distinct
positive entries, together with the self-adjoint sum of the adjacent
matrix units and their adjoints, generates this matrix algebra.

Suppose henceforth that $\Gamma$ is infinite. Let
$s_1,\ldots,s_k$ generate $\Gamma$. Apply
Proposition~\ref{prop:finite} to the real and imaginary parts of the
$u_{s_j}$. Obtain a pair $H_0,K_0$, three unused positions, and an open
plateau $W$. Then $D_0=C^*(1,H_0,K_0)$ contains every $u_\gamma$.

Choose $z,\chi_n,\vartheta_n$ as in Lemma~\ref{lem:bands}, and let
$(f_n)$ be norm dense in $C(X)_\sa$. By
Proposition~\ref{prop:local}, for each $n$ there are
$H_n\in\Her(\chi_n)_+$ and $K_n\in\Her(\chi_n)_\sa$ such that
\[
 f_n\in C^*(1,\{u_\gamma:\gamma\in\Gamma\},\chi_n,H_n,K_n).
\]
Replace $H_n$ and $K_n$, respectively, by
$H_n/(1+\|H_n\|)$ and $K_n/(1+\|K_n\|)$. Both norms are then at most
one, and multiplication by the original scalar constants recovers the
unscaled pair. The series
\[
 P=\sum_{n=1}^\infty2^{-n}H_n,
 \qquad Q=\sum_{n=1}^\infty2^{-n}K_n
\]
converge in norm and belong to $\Her(W)$. Encode the three
self-adjoint elements $z,P,Q$ in the unused positions of
Proposition~\ref{prop:finite}. For the resulting $K=K^*$,
$D=C^*(1,H_0,K)$ contains $D_0,z,P,Q$. Moreover,
\[
 2^n\vartheta_n(z)P\vartheta_n(z)=H_n,
 \qquad
 2^n\vartheta_n(z)Q\vartheta_n(z)=K_n.
\]
Thus $D$ contains every $f_n$, and therefore $C(X)$ and all
implementing unitaries. Hence $D=A$.

Set $z_0=1+H_0+iK$. Since
$\operatorname{Re}z_0=1+H_0$ has spectrum in $[1,1+\|H_0\|]$, choose
a continuous scalar function vanishing at zero and equal to one on
this interval. Its functional calculus shows that $1\in C^*(z_0)$.
Consequently $H_0,K\in C^*(z_0)$, and $C^*(z_0)=A$.
\end{proof}

\begin{remark}
Finite generation is used only in the first paragraph of the
infinite-group argument: recovery of the finitely many $u_{s_j}$ gives
every $u_\gamma$. The finite Fourier proposition itself is valid for
countably infinite amenable groups. The local proposition is also a
countable-group statement, but it is relative to the implementing
unitaries. Since the finite frame depends on the prescribed local
support, these two propositions do not by themselves establish the
theorem for arbitrary countable groups. A countable-group extension
would require a construction which recovers the unitaries needed by
each successive local frame while preserving all previously recovered
data. No such extension is claimed here.
\end{remark}

\begin{remark}
The proof uses the diagonal functions and the explicit matrix
coordinates of the Rokhlin towers. Weak tracial approximation alone
does not provide the finite-frame recovery formula
\eqref{eq:local-frame-reconstruction}. The result proved here is
therefore a theorem for the stated dynamical hypotheses, rather than
a statement about all weakly tracially homogeneous $C^*$-algebras.
\end{remark}

\end{document}